\newenvironment{sideremark}{\endgraf\bigskip
\indent\qquad \qquad \vrule\,\vrule \qquad \begin{minipage}{13cm}
 }
{\end{minipage}\endgraf\bigskip}
\newenvironment{itemze}{\begin{list}{$\bullet$}{%
      \setlength{\parsep}{1mm}%
      \setlength{\itemsep}{0cm}%
      \setlength{\leftmargin}{1cm}%
      \setlength{\labelwidth}{\leftmargin}%
    }{}}{\end{list}}
    \newcommand{\dom}{\mbox{\rm dom}}
    \newcommand{\thzfc}{\mathrm{ZFC}}
    \newcommand{\Ewf}{\mathcal{E}}
    \newcommand{\Fwf}{\mathcal{F}}
    \newcommand{\Hwf}{\mathcal{H}}
    \newcommand{\Mwf}{\mathcal{M}}
    \newcommand{\Nwf}{\mathcal{N}}
    \newcommand{\Pwf}{\mathcal{P}}
    \newcommand{\bfrak}{\mathfrak{b}}
    \newcommand{\cfrak}{\mathfrak{c}}
    \newcommand{\dfrak}{\mathfrak{d}}
    \newcommand{\xfrak}{\mathfrak{x}}
    \newcommand{\yfrak}{\mathfrak{y}}
    \newcommand{\menos}{\smallsetminus}
    \newcommand{\frestr}{\!\!\upharpoonright\!\!}
    \newcommand{\limdir}{\mbox{\rm limdir}}
\DeclareMathOperator{\supp}{supp}
\DeclareMathOperator{\cov}{cov}
\DeclareMathOperator{\add}{add}
\DeclareMathOperator{\non}{non}
\DeclareMathOperator{\cf}{cf}
\newcommand{\com}{\text{-compact}}
\DeclareMathOperator{\countable}{countable}
    \newcommand{\Aor}{\mathbb{A}}
    \newcommand{\Bor}{\mathds{B}}
    \newcommand{\Dor}{\mathds{D}}
    \newcommand{\Eor}{\mathds{E}}
    \newcommand{\Por}{\mathds{P}}
    \newcommand{\Qor}{\mathds{Q}}
    \newcommand{\Sor}{\mathds{S}}
    \newcommand{\Qnm}{\dot{\mathds{Q}}}
    \newcommand{\Anm}{\dot{\mathds{A}}}
    \newcommand{\Bnm}{\dot{\mathds{B}}}
    \newcommand{\Cnm}{\dot{\mathds{C}}}
    \newcommand{\Dnm}{\dot{\mathds{D}}}
    \newcommand{\Q}{\mathbb{Q}}
    \newcommand{\R}{\mathbb{R}}
    \newcommand{\sii}{{\ \mbox{$\Leftrightarrow$} \ }}
    \newcommand{\forces}{\Vdash}
    \newcommand{\stemseq}{\mathrm{SS}}
\title{The left side of Cicho\'n's diagram}
\author{Martin Goldstern}
\address{Institut f\"ur Diskrete Mathematik und Geometrie, Technische Universit\"at Wien, Wiedner Hauptstrasse 8-10/104, 1040 Vienna, Austria.}
\email{goldstern@tuwien.ac.at}
\urladdr{http://www.tuwien.ac.at/goldstern/}
\author[Diego A. Mej\'ia]{Diego Alejandro Mej\'ia}
\address{Institut f\"ur Diskrete Mathematik und Geometrie, Technische Universit\"at Wien, Wiedner Hauptstrasse 8-10/104, 1040 Vienna, Austria.}
\email{diego.guzman@tuwien.ac.at}
\urladdr{http://www.researchgate.com/profile/Diego\_Mejia2}
\author{Saharon Shelah}
\address{Einstein Institute of Mathematics, Edmond J. Safra Campus, Givat Ram, The hebrew University of Jerusalem, Jerusalem, 91904, Israel, and Department of Mathematics, Rutgers University, New Brunswick, NJ 08854, USA.}
\email{shlhetal@math.huji.ac.il}
\urladdr{http://shelah.logic.at}
\thanks{This work was
partially supported by European Research Council grant 338821.
The first and second authors were supported by the Austrian Science Fund (FWF) P24725-N25 (first author), P23875-N13 and I1272-N25 (second author) and they were
partially supported by the National Science Foundation under grant DMS-1101597.\\
Publication 1066 on Shelah's list.
}
\subjclass[2010]{03E17, 03E35, 03E40}
\keywords{Forcing; eventually different reals; Cicho\'n's diagram; finite support iteration}
\begin{document}

\makeatletter

\theoremstyle{plain}
  \newtheorem{theorem}{Theorem}[section]
  \newtheorem{corollary}[theorem]{Corollary}
  \newtheorem{lemma}[theorem]{Lemma}
  \newtheorem{mainlemma}[theorem]{Main Lemma}
  \newtheorem{maintheorem}[theorem]{Main Theorem}
  \newtheorem{prop}[theorem]{Proposition}
  \newtheorem{claim}[theorem]{Claim}
  \newtheorem{exer}{Exercise}
\theoremstyle{definition}
  \newtheorem{definition}[theorem]{Definition}
  \newtheorem{example}[theorem]{Example}
  \newtheorem{remark}[theorem]{Remark}
  \newtheorem{context}[theorem]{Context}
  \newtheorem{notation}[theorem]{Notation}
  \newtheorem{question}[theorem]{Question}
  \newtheorem{problem}[theorem]{Problem}
  \newtheorem{fact}[theorem]{Fact}

\begin{abstract}
Using a finite support iteration of ccc forcings, we construct
 a model of
 $\aleph_1<\add(\Nwf)<\cov(\Nwf)<\bfrak<\non(\Mwf)<\cov(\Mwf)=\cfrak$.
\end{abstract}
\maketitle

\setcounter{section}{0}

\section{Introduction}

 % \begin{sideremark}
 %    Talk a bit about Cicho\'n's diagram.   Previous results.
 %
 %    Main goal:  Get all cardinals different.   Difficulties with
 %    \begin{itemize}
 %       \item finite support iteration: Cohen reals
 %       \item countable support iteration:~$\aleph_2$.
 %    \end{itemize}
 %    Alternatives:  matrix iteration (still finite support!),
 %     add something at the end, short iteration at the end.
 % \end{sideremark}

How many (Lebesgue) null sets do you need to cover the real line?
How many points do you need to get a non-null set?  What is
the smallest number of null sets that you need to get a union
which is not null any more?   The answers to these questions
are the cardinals $\cov(\Nwf)$, $\non(\Nwf)$,
$\add(\Nwf)$, and similar definitions are possible for
other ideals, such as the ideal $\Mwf$  of meager (=first category)
sets, the ideal of at most countable sets,
or the ideal of $\sigma$-compact subsets of the irrationals.

The cardinal $\add(\sigma\com)=\non(\sigma\com)$ is usually called
$\bfrak$; it is the smallest size of a family of functions from $\omega$
to $\omega$ which is not eventually bounded by a single function.  We
define $\dfrak:=\cov(\sigma\com)$, and write $\cf(I)$ for the
smallest size of a basis of any ideal $I$.

Cicho\'n's diagram
   (see \cite{CKP},
   \cite{FremCich}, \cite{BaJu95})
 is the following table of 12 cardinals:
\medskip

\tikz{
\node (o1) at (-1,0) {$\aleph_1$} ;
\node (an) at (2,0) {$\add(\Nwf)$} ;
\node (am) at (5,0) {$\add(\Mwf)$} ;
\node (cm) at (8,0) {$\cov(\Mwf)$} ;
\node (nn) at (11,0) {$\non(\Nwf)$} ;
\node (bb) at (5,1) {$\bfrak$} ;
\node (dd) at (8,1) {$\dfrak$} ;
\node (cn) at (2,2) {$\cov(\Nwf)$} ;
\node (nm) at (5,2) {$\non(\Mwf)$} ;
\node (fm) at (8,2) {$\cf(\Mwf)$} ;
\node (fn) at (11,2) {$\cf(\Nwf)$} ;
\node (cc) at (13,2) {$2^{\aleph_0}$} ;
\draw (o1) edge [->] (an)
      (an) edge [->] (am)
      (am) edge [->] (cm)
      (cm) edge [->] (nn)
      (nn) edge [->] (fn)  ;
      (fn) edge [->] (cc)  ;
\draw (am) edge [->] (bb)
       (bb) edge [->] (nm) ;
\draw (cm) edge [->] (dd)
       (dd) edge [->] (fm) ;
\draw (an) edge [->] (cn)
      (cn) edge [->] (nm)
      (nm) edge [->] (fm)
      (fm) edge [->] (fn)
      (fn) edge [->] (cc)   ;
\draw (bb) edge [->] (dd) ;
}

\medskip

% or this, if there is no enough space:
 % \tikz{
 % \node (o1) at (2,-1) {$\aleph_1$} ;
 % \node (an) at (2,0) {$\add(\Nwf)$} ;
 % \node (am) at (5,0) {$\add(\Mwf)$} ;
 % \node (cm) at (8,0) {$\cov(\Mwf)$} ;
 % \node (nn) at (11,0) {$\non(\Nwf)$} ;
 % \node (bb) at (5,1) {$\bfrak$} ;
 % \node (dd) at (8,1) {$\dfrak$} ;
 % \node (cn) at (2,2) {$\cov(\Nwf)$} ;
 % \node (nm) at (5,2) {$\non(\Mwf)$} ;
 % \node (fm) at (8,2) {$\cf(\Mwf)$} ;
 % \node (fn) at (11,2) {$\cf(\Nwf)$} ;
 % \node (cc) at (11,3) {$2^{\aleph_0}$} ;
 % \draw (o1) edge [->] (an)
 %       (an) edge [->] (am)
 %       (am) edge [->] (cm)
 %       (cm) edge [->] (nn)
 %       (nn) edge [->] (fn)  ;
 %       (fn) edge [->] (cc)  ;
 % \draw (am) edge [->] (bb)
 %        (bb) edge [->] (nm) ;
 % \draw (cm) edge [->] (dd)
 %        (dd) edge [->] (fm) ;
 % \draw (an) edge [->] (cn)
 %       (cn) edge [->] (nm)
 %       (nm) edge [->] (fm)
 %       (fm) edge [->] (fn)
 %       (fn) edge [->] (cc)   ;
 % \draw (bb) edge [->] (dd) ;
 % }

%%  \tikz {
% \node (a) at (0,2) [rectangle] {Hello};
% \node (b) at (2,2) [rectangle, dashed] {World};
% \node (c) at (4,2) [circle, dashed] {$ c^2 $};
% \node (d) at (4,0) [circle] {$ \delta $};
% \draw (a) edge[->] (b) (b) edge[->] (c)
% (b) edge[->] (d) (d) edge[->] (a);
% }

The arrows show
provable
inequalities between these cardinals, such as
$$ \aleph_1 = \non(\countable) \le \add(\Nwf) \le \cov(\Nwf) \le 2^{\aleph_0} =  \cov(\countable).$$

In addition to the inequalities indicated it is also known that
$\add(\Mwf) = \min (\bfrak, \cov(\Mwf))$ and
$\cf(\Mwf) = \max (\dfrak, \non(\Mwf))$.

For any two of these  cardinals, say $\xfrak $ and $\yfrak$, the relation
$\xfrak\le \yfrak$ is provable in ZFC if and only if this relation can
be seen in the diagram.
However, the question how many of these cardinals can be different
in a single ZFC-universe is still open.

Some models of partial answers to this question are constructed in \cite{Me-MatIt} and \cite{FGKS}. In this paper, we will construct
a model, so far unknown, where the following strict inequalities hold:
$$  \omega_1 < \add(\Nwf) < \cov(\Nwf)  < \bfrak < \non(\Mwf) < \cov(\Mwf) = 2^{\aleph_0}$$
Moreover, the values of these cardinals can be quite arbitrary.

\section{Informal overview}\label{SecOverview}

\subsection{Increasing $\add(\Nwf)$}
Assume for simplicity that GCH holds. For any regular uncountable
cardinal  $\kappa$
there is a natural way to force~$\add(\Nwf)=\kappa$, namely a finite
support iteration $(\Por_\alpha,\Qnm_\alpha:\alpha<\kappa)$
 of length~$\kappa$, where in each step $\alpha$ the forcing $\Qnm_\alpha$
will be amoeba forcing $\Aor$, which
will add an amoeba real~$\eta_\alpha$; this real will code a null set $N_\alpha$
that covers not only all reals from $V^{\Por_\alpha}$ but even the union of
all Borel null sets whose code is in~$V^{\Por_\alpha}$.  The final model $V^{\Por_\kappa}$
will satisfy the following:
\begin{itemize}
   \item  (as $\kappa$ is regular:) Every small (i.e.: of size $<\kappa$)
    family of (Borel) null sets
   will be
   added before stage~$\kappa$; hence its union will be covered by one of
    the sets~$N_\alpha$.   So $\add(\Nwf)\ge
   \kappa$.
   \item The union of all $N_\alpha$ contains all reals and is in particular
      not of measure zero; hence  also  $\add(\Nwf)\le   \kappa$.
\end{itemize}

\newcommand{\kb}{{\kappa_{b}}}
\newcommand{\kct}{{\kappa_{ct}}}
\newcommand{\kcn}{{\kappa_{cn}}}
\newcommand{\kan}{{\kappa_{an}}}
\newcommand{\kam}{{\kappa_{am}}}
\newcommand{\knm}{{\kappa_{nm}}}
\newcommand{\n}[1]{\underaccent{\tilde}{#1}}

\newcommand{\sqpk}{\seq{p_k}{k<\omega}}
\newcommand{\sqk}[1]{\seq{#1}{k<\omega}}
\newcommand{\seq}[2]{\langle #1 \rangle_{#2}}

This model will of course also satisfy $2^{\aleph_0} =\kappa$. If we are
given two regular cardinals $\kappa_{an}$ and $\kappa_{ct}$ (we
write $\kan$ to indicate that this cardinal is intended to be the
{\bf a}dditivity of {\bf n}ull sets, and $\kct$ for the intended size of the
{\bf c}on{\bf t}inuum), then we
can construct
a ccc forcing notion $\Por$ satisfying
$$ \kappa_{an} = \add(\Nwf) < 2^{\aleph_0} = \kappa_{ct}$$
as the finite support limit of a finite
support iteration $(\Por_\alpha,\Qnm_\alpha:\alpha<\kappa_{ct})$ as follows:
\begin{itemize}
   \item For each $\alpha < \kct $ we choose a $\Por_\alpha$-name $\n X_\alpha$
      of a family of Borel  measure zero sets (or really: Borel codes of
      measure zero sets) of  size~$<\kan$.
   \item We find a (name for a) ZFC-model $M_\alpha$ of size $<\kan$ which is forced to
     include $\n X_\alpha$.   \\
      We then
     let $\Qnm_\alpha$ be the $\Por_\alpha$-name for  $\Aor\cap M_\alpha$. \\
     (So $\Qnm_\alpha$ is the $\Por_\alpha$-name for amoeba forcing in some small
     model containing~$\n X_\alpha$, where ``small'' means of size $<\kan$
     in~$V^{\Por_\alpha}$.)
    \item The generic null set $N_\alpha$ added by $\Qor_\alpha$ will cover
     the union of all measure zero sets in~$\n X_\alpha$.
\end{itemize}
If we choose the sets $\n X_\alpha$ appropriately (using a bookkeeping
argument), we can ensure that in $V^{\Por_{\kct}}$ every union of $<\kan$
null sets will be a null set; this shows that $\add(\Nwf)\ge \kan$.

The union of all null sets coded in the intermediate model
$V^{\Por_{\kan}}$ (equivalently,
the union  $\bigcup_{\alpha< \kan} N_\alpha$, where we view the $N_\alpha$
as given by Borel codes that are to be interpreted in the final model
$V^{\Por_\kct}$)  will be non-null in the final model\footnote{Another way to say this is that the reals in $\omega^\omega\cap V^{\Por_{\kan}}$ are not localized by a single slalom from $S(\omega,\Hwf)$, see Example \ref{SubsecUnbd}(4)}, witnessing
$\add(\Nwf)\le \kan$.

This method of using small subposets of classical forcing notions
is well known, see for example
  \cite{JuSh-KunenMillerchart} and \cite{Br-Cichon}.

\subsection{Increasing~$\cov(\Nwf)$,~$\bfrak$, $\non(\Mwf)$}

In a similar way we could construct a model where $\cov(\Nwf)$ is large. The
natural choice for an iterand $\Qnm_\alpha$ would be random forcing.

If we want to get $\cov(\Nwf)=\kcn < \kct = 2^{\aleph_0}$, we could use
a finite support iteration of length $\kct$ where each iterand $\Qnm_\alpha$
is the random forcing  $\Bor $ from a small submodel of the intermediate
model~$V^{P_\alpha}$. Standard bookkeeping will ensure that the
resulting model satisfies $\cov(\Nwf)\ge \kcn$.

We can also ensure that the final  model $V^{\Por_{\kct}} $ will not contain any
random reals over the intermediate model $V^{\Por_{\kcn}} $; thus we also
have $\cov(\Nwf)\le  \kcn$.

Replacing  random forcing with Hechler forcing $\Dor$, we can get a model where
the cardinal $\bfrak$ has an intermediate value.

Finally, there is a canonical forcing that will increase~$\non(\Mwf)$,
the forcing $\Eor$ which adds an  ``eventually different real''.
 Since the properties of this forcing notion will play a crucial role in our arguments, we give an
explicit definition.

\begin{definition}
   The elements of the forcing notion  $\Eor$  are pairs $p=(s,\varphi)=(s^p,\varphi^p)$
  where $s\in \omega^{<\omega}$ and there is some  $w\in \omega$ such that
  $\varphi$ is a function $\varphi: \omega \to [\omega]^{\le w}$ satisfying $s(i)\notin \varphi(i)$ for all $i\in
    \dom(s)$.    The minimal such $w$ will be called the \emph{width}
    of $\varphi$, written $w^p=\mathrm{width}(\varphi)$.

  A function $f:\omega\to\omega$ is \emph{compatible} with a condition $(s,\varphi)$ if $s$ is
  an initial segment of $f$, and $f(i)\notin \varphi(i)$ holds for all $i$.

 %    The generic object will be a function $g:\omega\to \omega$, and
 %    the intended meaning of a condition $(s,\varphi)$ is to
 %    place the following restrictions on $g$:
 % \begin{itemize}
 %    \item  $\forall i\in \dom(s):  g(i) = s(i)$.
 %    \item  $\forall i\in \omega:  g(i)\notin \varphi(i)$.
 % \end{itemize}
 Our intention is that there will be a ``generic'' function $g$, such
  that each condition $p$ forces that $g$ is compatible with $p$.
  Motivated by this intention, we define $(s',\varphi')\le (s,\varphi)$
  by
\begin{itemize}
   \item  $s \subseteq s'$.
   \item  $\forall i\in \omega: \varphi(i)\subseteq \varphi'(i)$.
\end{itemize}
\end{definition}

Letting $g$ be the name for $\bigcup \{ s: (s,\varphi)\in G\}  $, the
following properties are easy to check:
\begin{remark}
  \begin{enumerate}
   \item $(s,\varphi)$ indeed forces that  $g$ is compatible with $(s,\varphi)$.
   \item If we change the definition by requiring $\varphi$ to be defined
     on $\omega\setminus \dom(s)$ only (and adding the condition
     $s'(i)\notin \varphi(i)$ in the definition of $\le_\Eor$), we get
     an equivalent forcing notion which is moreover separative.
  \item Our forcing $\Eor$ is an  inessential variant of usual ``eventually
     different'' forcing notion in \cite{Mi}.
  \end{enumerate}
\end{remark}

 % \begin{definition}
 %    For every condition $p=(s_1,\varphi_1)$ and any $k\in \omega$ we
 %    define $p|k $ as the condition $ (s_0,\varphi_0)$
 %    satisfying $$ s_0 = s_1,\  \forall i<k (\varphi_0(i)=\varphi_1(i)),\
 %    \forall i\ge k (\varphi_0(i)=\emptyset)$$
 % \end{definition}
 % Clearly the condition $p $ is stronger than $p|k$: $p\le p|k$.
 % It is also easy to see that a function $f$ is compatible with $p$ iff $f$
 % is compatible with each $p|k$.
 %
 % not needed?  March 23

\subsection{Putting things together}\label{SubsecIt}

Assume again GCH, and let $\aleph_1 \le \kan \le \kcn \le  \kb \le \knm \le \kct$.    We want to construct a ccc iteration $\Por$ such that $\Por$ forces
$$ \add(\Nwf) = \kan, \ \  \cov(\Nwf) = \kcn, \ \
     \bfrak = \kb, \ \
     \non(\Mwf) = \knm,  \  \
     \cov(\Mwf) =  2^{\aleph_0} = \kct.$$

A naive approach would use an iteration of length $\kct$ in which all
iterands are ``small'' versions of Amoeba forcing, random forcing,
 Hechler forcing and eventually different forcing.  Here,
\begin{itemze}
 \item ``small Amoeba'' would mean:  Amoeba forcing from
       a  model of size~$<\kan$,
 \item ``small random'' would mean:  random forcing from
     a model smaller than~$\kcn$,
 \item ``small Hechler'' would mean:  Hechler forcing from
     a model smaller than~$\kb$,
 \item ``small eventually different'' would mean:  eventually different
    forcing from
     a model smaller than~$\knm$.
\end{itemze}
If we use suitable bookkeeping, such an iteration will ensure that all
the cardinals considered are \emph{at least} their desired value.  For
example, every small family $\Fwf$ of null sets (i.e., of Borel codes
of null sets) will appear in an intermediate model, and the bookkeeping
strategy will ensure that $\Fwf$ was considered at some stage~$\alpha$. The amoeba null set added in stage $\alpha+1$ will cover all null sets coded in~$\Fwf$. Similar arguments work for the other cardinal characteristics. Moreover,
we could explicitly add Cohen reals cofinally, or use the fact that
any finite support iteration adds Cohen reals in every limit step, to
conclude that $\cov(\Mwf)\ge \kct$.

That is, the final model will satisfy
$$ \add(\Nwf) \ge  \kan, \ \  \cov(\Nwf) \ge  \kcn, \ \
     \bfrak \ge  \kb, \ \
     \non(\Mwf) \ge  \knm,  \  \
      2^{\aleph_0}\ge  \cov(\Mwf) \ge  \kct.$$
Using well-known iteration theorems (see
  \cite{JuSh-KunenMillerchart},  \cite{Br-Cichon}, \cite[Section~6]{BaJu95}, or
the summary of  \cite[Section~2]{Me-MatIt} reviewed in Section \ref{SecPresProp})
we can conclude that
\begin{itemze}
\item
 the union of the
family of null sets added in the first
$\kan$ steps still is not a null set in the final model,
\item
 there
is no random real over the model~$V^{P_{\kcn}}$,
\item
the reals from
the model $V^{P_{\knm}}$ are still nonmeager,
\item
 the iteration
does not add more than $\kct$ reals.
\end{itemze}
So we also get
$$ \add(\Nwf) \le  \kan, \ \ \cov(\Nwf) \le  \kcn, \ \
     \qquad \qquad
     \non(\Mwf) \le  \knm,  \ \
     2^{\aleph_0} \le  \kct.$$

However, it is not immediately obvious
that the reals from the model $V^{P_\kb}$
stay an unbounded
family, or more explicitly:
 that the eventually different forcing does
not add an upper bound to this family.
Indeed, it is consistent that  a small sub-po of $\Eor$ (even one of
the form
$\Eor\cap M$ for some model $M$)  adds a dominating real, see \cite{Paw-Dom}.

The full forcing $\Eor$, on the other hand,  preserves unbounded families,
see \cite{Mi}.

A variant of this construction sketched above,
where the full forcing $\Eor$ is used rather
than small subsets of $\Eor$, would preserve the unboundedness of
a $\kb$-sized family and hence guarantee $\bfrak=\kb$, at
the cost of raising the value of $\non(\Mwf)$ to $\kct$.

Another variant is described in  \cite[Theorem~3]{Me-MatIt}:  An iteration
of length  $\kct\cdot \knm$ (ordinal product) in which the full $\Eor$
forcings are used will yield a model of
   $$ \add(\Nwf) = \kan, \ \  \cov(\Nwf) = \kcn, \ \
     \bfrak = \kb, \ \
     \non(\Mwf) = \cov(\Nwf) = \knm,  \  \
     2^{\aleph_0} = \kct$$

% is constructed. The iteration described above works with the following changes: iteration of length $\kct\knm$ (ordinal product) and we use full $\Eor$ in the ED coordinates. Here, unbounded families of size $\kb$ are preserved, $\non(\Nwf)=\cov(\Mwf)=\knm$ because of the cofinal many Cohen and ED reals added through the iteration and $\cfrak=\kct$ because of the many reals added. Moreover, because of the first $\kct$-many Cohen reals added, $\dfrak=\non(\Nwf)=\kct$.

In this paper we want to additionally get $\non(\Mwf)=\knm<\cov{\Mwf}=\cfrak=\kct$, so it seems necessary to use small subposets of $\Eor$.

The main point in the following section is to ensure that we will
preserve an unbounded family of size $\kb$ in our iteration.

\subsection{Ultrafilters help us decide}\label{SubsecUfhelps}
The actual construction that we will use will be given in
section~\ref{SecMain}.   It will be an iteration of length
$\kct\cdot\knm$ (ordinal product), where in each coordinate a
``small'' forcing is added, as described above: an amoeba forcing
of size $< \kan$, etc.

 For notational convenience we will start in a ground model where we
 already have an unbounded family $F = \{f_i:i<\kb\}$.   Moreover we
 will assume that every subfamily of size $\kb$ is again
 unbounded.

 To simplify the presentation in this section, we will consider an
 iteration adding small $\Eor$ reals only.   We will sketch how to
 construct such an iteration that does not destroy the unboundedness
 of $F$.  Adding other ``small'' forcings to the iteration
 will not be a problem, as all these forcings will be smaller
  than~$\kb$; only the small $  \Eor$ forcing notions may be of
 size~$\ge\kb$.   A detailed proof is given in
 Main Lemma~\ref{PresUnbLemma}.

 Now assume that our iteration $(\Por_\alpha,\Qnm_\alpha: \alpha <
 \delta)$
 has finite support limit~$\Por_\delta$, and that there is a
 $\Por_\delta$-name $\n g$
 of a function which bounds all~$f_\alpha$. We can find a family
 of conditions $(p_i:i<\kappa)$ and natural number $m_i$ such that
    $$p_i \forces \forall n\ge m_i: f_i(n) \le \n g(n).$$
 By thinning out our family we may assume that all $m_i$ are equal,
 and for notational simplicity we will moreover assume
  they are all~$0$.

 Moreover, we may assume that the $p_i$ form a $\Delta$-system
 satisfying a few extra uniformity  conditions (i.e., they behave
 quite uniformly on the root).

 We now choose a countable subset $i_0 < i_1 < \cdots $ of $\kappa$
 and some $\ell$ such that
 $f_{i_k}(\ell)\ge k$ for all~$k$  (this is possible, as
 otherwise our family $(f_i)_{i<\kb}$ would be bounded).
  Again assume without loss of generality~$\ell=0$, and $i_k=k$ for all~$k$.

 We now have a \emph{countable} $\Delta$-system of conditions
 $\sqpk$ in~$P_\delta$,
 where $p_k \forces \n g(0) \ge f_k(0)$ for all~$k$.

 If we can now find a $\Por_\delta$-name $\n D_\delta$
 of a non-principal ultrafilter and a condition $q$ such that
 $$ q \forces_{\Por_\delta} \{ k:  p_k \in G_\delta \} \in \n D_\delta,$$
 then we have our desired  contradiction, as already the
 empty condition forces that the set $\{ k: p_k\in G_\delta\}$
 is finite, and in fact $f_k(0)$ is bounded by the number $\n g(0)$ for any $k$ in this set.

 To get this ultrafilter $\n D_\delta$ at the end of the proof,
 we need some preparations when we set up the iteration.  The name
 $\n D_\delta$ will of course depend on the countable sequence~$(p_k)_{k<\omega}$,
 but not very much; we will partition the set of all such sequences into
 a small family $(\Lambda_\epsilon: \epsilon < \knm) $
 of sets, and for each element $\Lambda_\epsilon$ of this small family we
 will define a name for an ultrafilter that will work for all
 countable $\Delta$-systems (coded) in~$\Lambda_\epsilon$.

%    Size $\kappa$ , or smaller? {\color{blue} Diego: any $\chi$ between $\kb$ and $\knm$, as long as we can use EK (i.e. $\kct\leq2^{\chi}$).}

\subsection{Ultrafilter limits in $\Eor$}

\begin{definition}\label{def.limit}
   Let $D$ be an ultrafilter on~$\omega$.

For each sequence $\bar A=\sqk{A_k} $ of subsets of $\omega$ we
define $\lim_D \bar A \subseteq \omega$ by taking the pointwise limit of the
characteristic functions, or in other words:
$$ n \in  \lim\nolimits_D \bar A \ \Leftrightarrow \ \{ k: n\in A_k \} \in D.$$

If $\bar \varphi = \sqk{\varphi_k} $ is a sequence of slaloms (i.e.,
each $\varphi_k$ is a function from $\omega$ to $[\omega]^{<\omega}$), then
we define $\psi:=\lim_D \bar \varphi$ as the function with domain $\omega$
satisfying $$ \psi(n) = \lim\nolimits_D \sqk{\varphi_k(n)}  $$
\end{definition}

In general the ultrafilter limit of a sequence of slaloms may contain
infinite sets.  However, the following fact gives a sufficient
condition for bounding the size of the sets in the ultrafilter limit.
\begin{fact}
  If $\bar A = \sqk{A_k}$ is a sequence of subsets of $\omega$, $b\in \omega$,
   and all
  $A_k$ satisfy $|A_k| \le b$, then also $\lim_D \bar A$ will have
  cardinality at most~$b$.

  Similarly, if $\bar \varphi = \sqk{\varphi_k}$ is a sequence of slaloms
  with the property that
  there is a number $b$ with $|\varphi_k(n)|\le b$ for all~$k$,~$n$, then
  also $\lim_D\bar \varphi$ will be a slalom consisting of sets of size $\le b$.
\end{fact}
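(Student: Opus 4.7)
The plan is to reduce the slalom statement to the set statement applied pointwise, and then prove the set statement by a short contradiction argument using that $D$ is closed under finite intersections.

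For the first assertion, I would suppose toward a contradiction that $L:=\lim_D \bar A$ has at least $b+1$ elements, and pick distinct $n_0,\dots,n_b\in L$. By the definition of $\lim_D$, each set $B_j:=\{k<\omega : n_j\in A_k\}$ belongs to $D$. Since $D$ is a filter, the intersection $B:=\bigcap_{j\le b} B_j$ also lies in $D$; in particular it is nonempty. Fix any $k\in B$. Then $\{n_0,\dots,n_b\}\subseteq A_k$, so $|A_k|\ge b+1$, contradicting the hypothesis $|A_k|\le b$. Hence $|L|\le b$.

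For the second assertion, note that $\psi=\lim_D \bar\varphi$ is defined by $\psi(n)=\lim_D \seq{\varphi_k(n)}{k<\omega}$ for each $n\in\omega$. Fix $n$ and apply the first part to the sequence $\bar A^{(n)}:=\seq{\varphi_k(n)}{k<\omega}$ of subsets of $\omega$; the hypothesis $|\varphi_k(n)|\le b$ for all $k$ is exactly the hypothesis needed, and we conclude $|\psi(n)|\le b$. So $\psi$ is indeed a slalom with $|\psi(n)|\le b$ for every $n$.

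There is no real obstacle here: the whole content is that the ultrafilter is closed under finite intersections and that membership in $\lim_D\bar A$ is witnessed by a set in $D$. The only thing to be a little careful about is not confusing the two indices ($k$, running over the sequence, versus $n$, the point where the limit slalom is being evaluated), but once the pointwise reduction in the previous paragraph is in place, the cardinality bound transfers immediately.
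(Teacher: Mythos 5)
Your proof is correct, and it is the natural argument the paper is implicitly relying on (the paper states this as a \emph{Fact} without a written proof). The reduction of the slalom clause to the set clause applied pointwise, and the contradiction via closure of $D$ under finite intersections, is exactly what is needed; nothing is missing.
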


\begin{definition}
   Let $s\in\omega^{<\omega}$, $w<\omega$, $D$ a non-principal ultrafilter on $\omega$ and $\bar p=\sqpk$ a sequence of conditions in $\Eor$ where $p_k=(s,\varphi_k)$ for some slalom $\varphi_k$ of width $\leq w$. $\lim_D\bar p$, \emph{the $D$-limit of $\bar p$ in $\Eor$}, is defined as the condition $(s,\lim_D\sqk{\varphi_k})$.
\end{definition}

To explain the connection between a sequence $\bar p=\sqpk$ and its
ultrafilter limit, we point out the following fact.   A stronger
version will be proved in Claim~\ref{succClaim}.
\begin{fact}\label{evDiffFact}
     Let $M$ be a small model.
     Let $D$ be an ultrafilter with $D\cap M \in M$,
     let $\Qor = \Eor \cap M$, $s\in \omega^{<\omega}$ and let $m^*<\omega$.

     Let $\bar \varphi=  \sqk{\varphi_k} $ be a sequence
     of slaloms of width bounded by $m^*$ and assume
     $\bar \varphi \in M$.

     Then the $D$-limit $q$ of the sequence $\bar p = \sqpk = \sqk{(s, \varphi_k)}$
     satisfies
     \begin{itemize}
       \item $q\in \Eor \cap M$.
       \item $q$ forces in $\Qor$ that the set
          $\{k<\omega \ / \ p_k\in G \}$   is infinite.
     \end{itemize}
\end{fact}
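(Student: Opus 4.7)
The plan is to prove the two clauses separately. Clause (i) follows from the preceding fact together with an absoluteness observation. Clause (ii) is a density argument that uses the ultrafilter to realize, for each condition below $q$, sufficiently many of the $p_k$ as compatible extensions.

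For (i), the preceding fact immediately gives that $\psi:=\lim_D \bar\varphi$ is a slalom of width at most $m^*$, so $q=(s,\psi)\in\Eor$. To see $q\in M$, observe that $\psi$ is definable from $\bar\varphi$ and $D\cap M$ alone: for each $n,j\in\omega$, the statement $j\in\psi(n)$ is equivalent to $\{k<\omega:j\in\varphi_k(n)\}\in D$, and this last set already lies in $M$ because $\bar\varphi\in M$, so its membership in $D$ is decided by $D\cap M$, which belongs to $M$ by hypothesis.

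For (ii), it suffices to show that the set of conditions forcing $\exists k\ge N\,(p_k\in G)$ is dense below $q$ for every $N<\omega$. So fix $r=(t,\psi')\le q$ in $\Qor$ and $N<\omega$. For each $i\in\dom(t)\setminus\dom(s)$, the inclusion $\psi'(i)\supseteq\psi(i)$ together with $t(i)\notin\psi'(i)$ gives $t(i)\notin\psi(i)=\lim_D\langle\varphi_k(i)\rangle_{k<\omega}$, so by the ultrafilter property
\[ A_i:=\{k<\omega:\, t(i)\notin\varphi_k(i)\}\in D. \]
Since $\dom(t)\setminus\dom(s)$ is finite and $D$ is non-principal, $\bigcap_i A_i$ is infinite; pick $k\ge N$ in it. Now set $r^*:=(t,\psi'\cup\varphi_k)$, where $(\psi'\cup\varphi_k)(i):=\psi'(i)\cup\varphi_k(i)$. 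For $i\in\dom(s)$ the requirement $t(i)\notin(\psi'\cup\varphi_k)(i)$ holds because both $r$ and $p_k$ are valid conditions; for $i\in\dom(t)\setminus\dom(s)$ it holds by the choice of $k$ together with $t(i)\notin\psi'(i)$. Hence $r^*\in\Eor$, and by construction $r^*\le r$ and $r^*\le p_k$. Finally $r^*\in M$ because $t,\psi',\varphi_k$ all lie in $M$, so $r^*\in\Qor$.

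The one subtle point is the absoluteness step in (i), which is exactly why the hypothesis $D\cap M\in M$ (rather than just $D\in V$) is needed. The density argument in (ii) is a routine ultrafilter-limit calculation, but it does rely crucially on \emph{both} halves of the validity condition: $t(i)\notin\psi'(i)$ on the $r$-side (to feed the ultrafilter on the suffix) and $s(i)\notin\varphi_k(i)$ built into $p_k$ (so that $r^*$ remains a legal condition on the stem $s$).
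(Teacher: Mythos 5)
Your proof is correct and takes essentially the same approach as the paper. The only cosmetic difference is in clause (ii): you phrase it as a direct density argument (for every $r\le q$ and $N$, produce $r^*\le r$ forcing some $p_k\in G$ with $k\ge N$), while the paper phrases the same calculation contrapositively (suppose some $q'\le q$ forces the set to be bounded by $k_*$, and then finds $k>k_*$ with $q'$ and $p_k$ compatible, a contradiction). Your writeup is slightly more explicit in that it names the common refinement $r^*=(t,\psi'\cup\varphi_k)$, verifies that it is a legal condition on both the stem part and the suffix, and checks $r^*\in M$; the paper asserts compatibility without spelling out the common extension or its membership in $M$. In both accounts, the key point is identical: feed the validity constraint $t(i)\notin\psi(i)=\lim_D\langle\varphi_k(i)\rangle$ on the new coordinates into the ultrafilter to extract a tail of good indices $k$.
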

\begin{proof}
   It is clear that $q\in \Eor$.   Since $M$ contains both the sequence
   $\bar p$ and the set $D\cap M$, we can compute $\lim_D \bar p $
   in $M$, hence $q\in M$.

   Now  assume that some $q'\le q$ forces that
          $\{k<\omega \ / \ p_k\in G \}$   is bounded by some $k_*$,
   so $q$ is incompatible with all $p_k$, $k>k_*$.

   For  each $i\in \dom(s^{q'}) $ we have $s(i)\notin \varphi^q(i)$,
   so the set $B_i:= \{k<\omega \ / \ s(i)\notin \varphi_k(i)\}$ is in $D$.
   Let $k\in \bigcap_{i\in \dom(s^{q'})} B_i $ be larger than $k_*$.
   Then $q'$ and $p_k$ are compatible.
\end{proof}

\section{Background on preservation properties}\label{SecPresProp}

For reader's convenience, we recall the preservation properties summarized in \cite[Sect. 2]{Me-MatIt} which will be applied in the proof of the Main Theorem \ref{MainThm}. These preservation properties were developed for fsi of ccc posets by Judah and Shelah \cite{JuSh-KunenMillerchart}, with improvements by Brendle \cite{Br-Cichon}. These are summarized and generalized in \cite{gold} and in \cite[Sect. 6.4 and 6.5]{BaJu95}.

\begin{context}\label{ContextUnbd}
 Fix an increasing sequence $\langle\sqsubset_n\rangle_{n<\omega}$ of 2-place closed relations (in the topological sense) in $\omega^\omega$ such that, for any $n<\omega$ and $g\in\omega^\omega$, $(\sqsubset_n)^g=\left\{f\in\omega^\omega\ /\ f\sqsubset_n g\right\}$ is (closed) nwd (nowhere dense).

 Put $\sqsubset=\bigcup_{n<\omega}\sqsubset_n$. Therefore, for every $g\in\omega^\omega$, $(\sqsubset)^g$ is an $F_\sigma$ meager set.

 For $f,g\in\omega^\omega$, say that \emph{$g$ $\sqsubset$-dominates $f$} if $f\sqsubset g$. $F\subseteq\omega^\omega$ is a \emph{$\sqsubset$-unbounded family} if no function in $\omega^\omega$ $\sqsubset$-dominates all the members of $F$. Associate with this notion the cardinal $\bfrak_\sqsubset$, which is the least size of a $\sqsubset$-unbounded family. Dually, say that $C\subseteq\omega^\omega$ is a \emph{$\sqsubset$-dominating family} if any real in $\omega^\omega$ is $\sqsubset$-dominated by some member of $C$. The cardinal $\dfrak_\sqsubset$ is the least size of a $\sqsubset$-dominating family.

 Given a set $Y$, say that a real $f\in\omega^\omega$ is \emph{$\sqsubset$-unbounded over $Y$} if $f\not\sqsubset g$ for every $g\in Y\cap\omega^\omega$.
\end{context}

It is clear that $\bfrak_\sqsubset\leq\non(\Mwf)$ and $\cov(\Mwf)\leq\dfrak_\sqsubset$.

Context \ref{ContextUnbd} is defined for $\omega^\omega$ for simplicity, but in general, the same notions apply by changing the space for the domain or the codomain of $\sqsubset$ to another uncountable Polish space whose members can be coded by reals in $\omega^\omega$.

From now on, fix $\theta$ an uncountable regular cardinal.

\begin{definition}[Judah and Shelah {\cite{JuSh-KunenMillerchart}}, {\cite[Def. 6.4.4]{BaJu95}}]\label{DefPresProp}
   A forcing notion $\Por$ is \emph{$\theta$-$\sqsubset$-good} if the following property holds\footnote{\cite[Def. 6.4.4]{BaJu95} has a different formulation, which is equivalent to our formulation for $\theta$-cc posets (recall that $\theta$ is uncountable regular). See \cite[Lemma 2]{Me-MatIt} for details.}: For any $\Por$-name $\dot{h}$ for a real in $\omega^\omega$, there exists a nonempty $Y\subseteq\omega^\omega$ (in the ground model) of size $<\theta$ such that, for any $f\in\omega^\omega$ $\sqsubset$-unbounded over $Y$, $\Vdash f\not\sqsubset\dot{h}$.

   Say that $\Por$ is \emph{$\sqsubset$-good} if it is $\aleph_1$-$\sqsubset$-good.
\end{definition}

This is a standard property associated to preserve $\bfrak_\sqsubset$ small and $\dfrak_\sqsubset$ large through forcing extensions that have the property. $F\subseteq\omega^\omega$ is \emph{$\theta$-$\sqsubset$-unbounded} if, for any $X\subseteq\omega^\omega$ of size $<\theta$, there exists an $f\in F$ which is $\sqsubset$-unbounded over $X$. It is clear that, if $F$ is such a family, then $\bfrak_\sqsubset\leq|F|$ and $\theta\leq\dfrak_\sqsubset$. On the other hand, $\theta$-$\sqsubset$-good posets preserve, in any generic extension, $\theta$-$\sqsubset$-unbounded families of the ground model and, if $\lambda\geq\theta$ is a cardinal and $\dfrak_\sqsubset\geq\lambda$ in the ground model, then this inequality is also preserved in any generic extension (see, e.g., \cite[Lemma 6.4.8]{BaJu95}). It is also known (from \cite{JuSh-KunenMillerchart}) that the property of Definition \ref{DefPresProp} is preserved under fsi of $\theta$-cc posets. Also, for posets $\Por\lessdot\Qor$, if $\Qor$ is $\theta$-$\sqsubset$-good, then so is $\Por$.

\begin{lemma}[{\cite[Lemma 4]{Me-MatIt}}]\label{smallPlus}
   Any poset of size $<\theta$ is $\theta$-$\sqsubset$-good. In particular, Cohen forcing is $\sqsubset$-good.
\end{lemma}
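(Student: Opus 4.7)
The plan is to extract, for each condition $p \in \Por$, a single ground-model real $g_p$ that encodes every possible finite approximation of $\dot h$ below $p$; the set $Y := \{g_p : p \in \Por\}$ then has size at most $|\Por| < \theta$, and the closedness of each $\sqsubset_n$ translates the forcing information into a ground-model $\sqsubset$-domination.

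Given a $\Por$-name $\dot h$ for a real in $\omega^\omega$, for each $p \in \Por$ I would form the subtree
\[
  T_p := \bigl\{\sigma \in \omega^{<\omega} \;:\; \exists q \le p\;(q \Vdash \dot h\frestr|\sigma| = \sigma)\bigr\}
\]
of $\omega^{<\omega}$. Since $\dot h$ is forced to be a total function on $\omega$, any witness $q \le p$ for $\sigma \in T_p$ can be strengthened to decide $\dot h(|\sigma|)$, so every $\sigma \in T_p$ has a successor in $T_p$; thus $T_p$ is pruned and admits (in $V$) a branch $g_p \in [T_p]$, say the leftmost one, defined by recursion. Let $Y := \{g_p : p \in \Por\}$, a nonempty subset of $\omega^\omega \cap V$ of size $\le |\Por| < \theta$.

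For the verification, suppose toward a contradiction that $f \in \omega^\omega$ is $\sqsubset$-unbounded over $Y$ while some $p_0 \in \Por$ forces $f \sqsubset \dot h$. Because $\sqsubset = \bigcup_n \sqsubset_n$, by the forcing theorem there are $p \le p_0$ and $n < \omega$ with $p \Vdash f \sqsubset_n \dot h$. Closedness of $\sqsubset_n$ lets us fix a tree $S_n \subseteq (\omega \times \omega)^{<\omega}$ with $f' \sqsubset_n g' \iff \forall k\;(f'\frestr k,\, g'\frestr k) \in S_n$. For each $\sigma \in T_p$ of length $k$, choose $q \le p$ with $q \Vdash \dot h\frestr k = \sigma$; then $q$ also forces $(f\frestr k,\, \dot h\frestr k) \in S_n$, hence $q \Vdash (f\frestr k,\, \sigma) \in S_n$, and since this is a statement about ground-model objects it is true in $V$. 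Applying this along the branch $g_p$ yields $(f\frestr k,\, g_p\frestr k) \in S_n$ for every $k$, so $f \sqsubset_n g_p$ and in particular $f \sqsubset g_p$, contradicting the $\sqsubset$-unboundedness of $f$ over $Y$.

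The ``in particular'' is then immediate by taking $\theta = \aleph_1$: Cohen forcing has size $\aleph_0 < \aleph_1$, hence is $\aleph_1$-$\sqsubset$-good, i.e., $\sqsubset$-good. I expect the one conceptually nontrivial point to be the passage from the forcing statement $p \Vdash f \sqsubset_n \dot h$ to the ground-model statement $f \sqsubset_n g_p$, which uses essentially that each $\sqsubset_n$ is \emph{closed}: only then do finite approximations of $\dot h$ read off from $T_p$ suffice to witness $\sqsubset_n$ along the branch $g_p$.
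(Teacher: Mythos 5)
Your argument is correct: the tree $T_p$ of finite approximations forced possible below $p$, a branch $g_p$ chosen in $V$, and the absoluteness of the tree representation $S_n$ of the closed relation $\sqsubset_n$ together give exactly the required transfer from $p\Vdash f\sqsubset_n\dot h$ to $f\sqsubset_n g_p$ for some $g_p\in Y$ with $|Y|\le|\Por|<\theta$. The paper itself gives no proof (it cites \cite[Lemma 4]{Me-MatIt}), and your proof is essentially the standard argument behind that citation, with the key point — closedness of each $\sqsubset_n$ making finite approximations of $\dot h$ suffice — correctly identified.
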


\begin{example}\label{SubsecUnbd}
 \begin{enumerate}[(1)]
  \item \emph{Preserving non-meager sets:} For $f,g\in\omega^\omega$ and $n<\omega$, define $f\eqcirc_n g$ iff $\forall_{k\geq n}(f(k)\neq g(k))$, so $f\eqcirc g$ iff $f$ and $g$ are eventually different, that is, $\forall^\infty_{k<\omega}(f(k)\neq g(k))$. Recall form \cite[Thm. 2.4.1 and 2.4.7]{BaJu95} that $\bfrak_\eqcirc=\non(\Mwf)$ and $\dfrak_\eqcirc=\cov(\Mwf)$.

  \item \emph{Preserving unbounded families:} For $f,g\in\omega^\omega$, define $f\leq^*_n g$ iff $\forall_{k\geq n}(f(k)\leq g(k))$, so $f\leq^*g$ iff $\forall^\infty_{k\in\omega}(f(k)\leq g(k))$. Clearly, $\bfrak=\bfrak_{\leq^*}$ and $\dfrak=\dfrak_{\leq^*}$. Miller \cite{Mi} proved that $\Eor$ is $\leq^*$-good. Random forcing $\Bor$ is also $\leq^*$-good because it is $\omega^\omega$-bounding. But, as discussed in Section \ref{SecOverview}, subposets of both may add dominating reals.

  \item \emph{Preserving null-covering families:} Let $\langle I_k\rangle_{k<\omega}$ be the interval partition of $\omega$ such that $|I_k|=2^{k+1}$ for all $k<\omega$. For $n<\omega$ and $f,g\in2^\omega$ define $f\pitchfork_ng\sii\forall_{k\geq n}(f\frestr I_k\neq g\frestr I_k)$, so $f\pitchfork g\sii \forall^\infty_{k<\omega}(f\frestr I_k\neq g\frestr I_k)$. Clearly, $(\pitchfork)^g$ is a co-null $F_\sigma$ meager set. This relation is related to the covering-uniformity of measure because $\cov(\Nwf)\leq\bfrak_\pitchfork$ and $\dfrak_\pitchfork\leq\non(\Nwf)$ (see \cite[Lemma 7]{Me-MatIt}).

  It is known from \cite[Lemma $1^*$]{Br-Cichon} that, given an infinite cardinal $\nu<\theta$, every $\nu$-centered forcing notion is $\theta$-$\pitchfork$-good.

  \item \emph{Preserving ``union of non-null sets is non-null'':} Fix $\Hwf:=\{id^{k+1}\ /\ k<\omega\}$ (where $id^{k+1}(i)=i^{k+1}$) and let $S(\omega,\Hwf)=\{\psi:\omega\to[\omega]^{<\omega}\ /\ \exists_{h\in\Hwf}\forall_{i<\omega}(|\psi(i)|\leq h(i))\}$. For $n<\omega$, $x\in\omega^\omega$ and a slalom $\psi\in S(\omega,\Hwf)$, put $x\in^*_n\psi$ iff $\forall_{k\geq n}(x(k)\in\psi(k))$, so $x\in^*\psi$ iff $\forall^\infty_{k<\omega}(x(k)\in\psi(k))$. By Bartoszy\'{n}ski's characterization \cite[Thm. 2.3.9]{BaJu95} applied to $id$ and to a function $g$ that dominates all the functions in $\Hwf$, $\add(\Nwf)=\bfrak_{\in^*}$ and $\cf(\Nwf)=\dfrak_{\in^*}$.

  Judah and Shelah \cite{JuSh-KunenMillerchart} proved that, given an infinite cardinal $\nu<\theta$, every $\nu$-centered forcing notion is $\theta$-$\in^*$-good. Moreover, as a consequence of results of Kamburelis \cite{kamburelis}, any subalgebra\footnote{Here, $\Bor$ is seen as the complete Boolean algebra of Borel sets (in $2^\omega$) modulo the null ideal.} of $\Bor$ is $\in^*$-good.

 \end{enumerate}
\end{example}

For a relation $\sqsubset$ as in Context \ref{ContextUnbd}, the following practical results present facts about adding Cohen reals that form strong $\sqsubset$-unbounded families.

\begin{lemma}\label{unbCohen}
   Let $\langle\Por_\alpha\rangle_{\alpha\leq\theta}$ be a $\lessdot$-increasing sequence of ccc posets where $\Por_\theta=\limdir_{\alpha<\theta}\Por_\alpha$. Assume that $\Por_{\alpha+1}$ adds a Cohen real $\dot{c}_\alpha$ over $V^{\Por_\alpha}$ for all $\alpha<\theta$. Then, $\Por_\theta$ forces that $\{\dot{c}_\alpha\ /\ \alpha<\theta\}$ is a $\theta$-$\sqsubset$-unbounded family.
\end{lemma}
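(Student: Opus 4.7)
The plan is to argue in $V^{\Por_\theta}$: given $X\subseteq\omega^\omega$ with $|X|<\theta$ in this model, I will produce a single $\beta<\theta$ such that $\dot{c}_\beta$ is $\sqsubset$-unbounded over $X$. The two ingredients are a reflection of $X$ into an intermediate extension and the classical fact that a Cohen real avoids every meager Borel set coded in the ground model.

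For the reflection, I would exploit the ccc of $\Por_\theta$ together with $\Por_\theta=\limdir_{\alpha<\theta}\Por_\alpha$. Each $g\in X$ has a nice $\Por_\theta$-name built from countably many antichains, each of which is countable by ccc; since every condition of $\Por_\theta$ lies in some $\Por_\alpha$ with $\alpha<\theta$ and $\theta$ has uncountable cofinality, one obtains $\alpha_g<\theta$ with $g\in V^{\Por_{\alpha_g}}$. Regularity of $\theta$ combined with $|X|<\theta$ then gives $\alpha^*:=\sup\{\alpha_g\ /\ g\in X\}<\theta$, so $X\subseteq V^{\Por_{\alpha^*}}$. Choosing any $\beta\in[\alpha^*,\theta)$, the real $\dot{c}_\beta$ is Cohen over $V^{\Por_\beta}$ and therefore also over $V^{\Por_{\alpha^*}}$.

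Next, for each $g\in X$ the set $(\sqsubset)^g=\bigcup_{n<\omega}(\sqsubset_n)^g$ is, by Context~\ref{ContextUnbd}, a countable union of closed nowhere dense sets, hence an $F_\sigma$ meager Borel set whose code is computable from $g$ and the fixed sequence $\langle\sqsubset_n\rangle_{n<\omega}$; thus this code lies in $V^{\Por_{\alpha^*}}$. Since Cohen reals avoid every meager Borel set coded in the ground model, $\dot{c}_\beta\notin(\sqsubset)^g$, i.e., $\dot{c}_\beta\not\sqsubset g$. As this holds for all $g\in X$, $\dot{c}_\beta$ is $\sqsubset$-unbounded over $X$, which is exactly what the definition of $\theta$-$\sqsubset$-unbounded demands for the family $\{\dot{c}_\alpha\ /\ \alpha<\theta\}$.

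The main obstacle is really the reflection step: one must simultaneously use the ccc of $\Por_\theta$ (so that each name involves only countably many conditions), the direct-limit structure (so that each such condition lies in some $\Por_\alpha$ with $\alpha<\theta$), and the uncountable regularity of $\theta$ (so that the supremum $\alpha^*$ stays below $\theta$). Once $X\subseteq V^{\Por_{\alpha^*}}$ is secured, everything reduces to the standard Cohen-avoids-meager argument applied to the $F_\sigma$ sets supplied by Context~\ref{ContextUnbd}.
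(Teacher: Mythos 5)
Your proof is correct and is essentially the standard argument behind this lemma, which the paper itself does not reprove but imports from the summary of preservation results in \cite{Me-MatIt}: reflect the small family $X$ into an intermediate extension $V^{\Por_{\alpha^*}}$ using the ccc, the direct-limit structure and the regularity of $\theta$, and then use that a later Cohen real $\dot{c}_\beta$ avoids every meager $F_\sigma$ set $(\sqsubset)^g$ coded in $V^{\Por_{\alpha^*}}$. The only points left implicit --- that $\theta$ remains regular in $V^{\Por_\theta}$ because the iteration is ccc (so the supremum $\alpha^*$ really stays below $\theta$), and that membership in the closed sets $(\sqsubset_n)^g$ is absolute between $V^{\Por_{\alpha^*}}$ and $V^{\Por_\theta}$ --- are routine and do not affect correctness.
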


\begin{corollary}\label{larged}
   Let $\delta\geq\theta$ be an ordinal and $\Por_\delta=\langle\Por_\alpha,\Qnm_\alpha\rangle_{\alpha<\delta}$ a fsi such that, for $\alpha<\delta$,
   \begin{enumerate}[(i)]
      \item $\Por_\alpha$ forces that $\Qnm_\alpha$ is $\theta$-$\sqsubset$-good and
      \item when $\alpha<|\delta|$, $\Por_{\alpha+1}$ adds a Cohen real over $V^{\Por_\alpha}$.
   \end{enumerate}
   Then, $\Por_\delta$ forces
   \begin{enumerate}[(a)]
     \item $\bfrak_\sqsubset\leq\theta$ and
     \item $\dfrak_\sqsubset\geq|\delta|$.
   \end{enumerate}

\end{corollary}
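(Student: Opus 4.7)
The plan is to derive both conclusions from Lemma~\ref{unbCohen} combined with the preservation machinery recalled earlier: namely, that $\theta$-$\sqsubset$-goodness is preserved under fsi of $\theta$-cc posets, and that $\theta$-$\sqsubset$-good posets preserve $\theta$-$\sqsubset$-unbounded families from their ground model.

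For (a), I would apply Lemma~\ref{unbCohen} to the initial segment $\langle\Por_\alpha\rangle_{\alpha\leq\theta}$. Since $\theta\leq|\delta|$, hypothesis~(ii) supplies a Cohen real $\dot c_\alpha$ over $V^{\Por_\alpha}$ for every $\alpha<\theta$, so $\Por_\theta$ forces that $\{\dot c_\alpha\ /\ \alpha<\theta\}$ is a $\theta$-$\sqsubset$-unbounded family of size $\theta$, which already witnesses $\bfrak_\sqsubset\leq\theta$ inside $V^{\Por_\theta}$. To transport this bound up to $V^{\Por_\delta}$, I would observe that the tail $\Por_\delta/\Por_\theta$ is a fsi of ccc posets each forced by~(i) to be $\theta$-$\sqsubset$-good; by the preservation theorem the tail is itself $\theta$-$\sqsubset$-good, and such forcings preserve $\theta$-$\sqsubset$-unbounded families from their ground model. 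Hence the Cohen family stays $\theta$-$\sqsubset$-unbounded in $V^{\Por_\delta}$, giving $\bfrak_\sqsubset\leq\theta$.

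For (b), I would prove the stronger statement that $\Por_\delta\Vdash\dfrak_\sqsubset\geq\kappa$ for every uncountable regular cardinal $\kappa$ with $\theta\leq\kappa\leq|\delta|$, by running exactly the argument of~(a) with $\kappa$ in place of $\theta$. Lemma~\ref{unbCohen} applied to $\langle\Por_\alpha\rangle_{\alpha\leq\kappa}$ produces a $\kappa$-$\sqsubset$-unbounded family of size $\kappa$ in $V^{\Por_\kappa}$; each iterand $\Qnm_\alpha$ for $\alpha\geq\kappa$ is $\theta$-$\sqsubset$-good and hence also $\kappa$-$\sqsubset$-good (the condition ``$|Y|<\kappa$'' being weaker than ``$|Y|<\theta$'' when $\theta\leq\kappa$), so $\Por_\delta/\Por_\kappa$ is $\kappa$-$\sqsubset$-good and preserves the family, yielding $\dfrak_\sqsubset\geq\kappa$ in $V^{\Por_\delta}$.

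The main obstacle I anticipate is that Lemma~\ref{unbCohen} explicitly requires the length to be uncountable and regular, so it cannot be invoked directly at length $|\delta|$ when $|\delta|$ is singular. I would resolve this by the elementary remark that every singular cardinal is the supremum of the regular cardinals below it: in the singular case I take the supremum of the lower bounds $\dfrak_\sqsubset\geq\kappa$ obtained above as $\kappa$ ranges over regular cardinals in $[\theta,|\delta|)$, while in the regular case I simply plug in $\kappa=|\delta|$. In either case, $\dfrak_\sqsubset\geq|\delta|$ holds in $V^{\Por_\delta}$, completing the argument.
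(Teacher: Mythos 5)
Your proof is correct and follows essentially the same route as the paper's: apply Lemma~\ref{unbCohen} at each regular $\nu\in[\theta,|\delta|]$ to produce a $\nu$-$\sqsubset$-unbounded family of Cohen reals, then use the fact that $\theta$-$\sqsubset$-goodness (hence $\nu$-$\sqsubset$-goodness for $\nu\geq\theta$) is preserved under fsi of $\theta$-cc posets to keep the family unbounded in $V^{\Por_\delta}$, yielding $\bfrak_\sqsubset\leq\theta$ and $\dfrak_\sqsubset\geq\nu$. The paper runs both bounds through a single parametrized application rather than two, and it leaves the singular-$|\delta|$ supremum implicit, but these are presentational differences only.
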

\begin{proof}
   By (ii) and Lemma \ref{unbCohen}, for any $\nu\in[\theta,|\delta|]$ regular, $\Por_\nu$ adds a $\nu$-$\sqsubset$-unbounded family of size $\nu$ (of Cohen reals), which is preserved to be $\nu$-$\sqsubset$-unbounded in $V^{\Por_\delta}$ by (i). Therefore, $\Por_\delta$ forces $\bfrak_\sqsubset\leq\nu\leq\dfrak_\sqsubset$ for any regular $\nu\in[\theta,|\delta|]$, so $\bfrak_\sqsubset\leq\theta$ and $|\delta|\leq\dfrak_\sqsubset$.
\end{proof}

\section{Iteration candidates}\label{SecItCandidates}

We describe, in a general way, the type of iterations and the characteristics and elements it may have in order to preserve unbounded families of a certain size. Fix, in this section, an uncountable regular cardinal $\kb$ (which represents the size of an unbounded family we want to preserve).

For our main result (Theorem~\ref{MainThm}), as described in the introduction, we may use a fsi alternating between small ccc posets and subposets of $\Eor$ and find an iteration where we can preserve an unbounded family of a desired size ($\kb$). We describe, in general, such iterations as follows.

\begin{definition}\label{DefItcand}
   An \emph{iteration candidate} $\mathbf{q}$ consists of
   \begin{enumerate}[(i)]
     \item an ordinal $\delta_{\mathbf{q}}$ (the length of the iteration) partitioned into two sets $S_{\mathbf{q}}$ and $C_{\mathbf{q}}$ (the first set represent the coordinates where a subposet of $\Eor$ is used, while the second set corresponds to the coordinates where small ccc posets are used),
     \item ordinals $\langle Q_{\mathbf{q},\alpha}\rangle_{\alpha\in C_{\mathbf{q}}}$ less that $\kb$ (the domains of the small ccc posets),
     \item a fsi $\langle\Por_{\mathbf{q},\alpha},
         \Qnm_{\mathbf{q},\alpha}\rangle_{\alpha<\delta_{\mathbf{q}}}$ and a sequence $\langle\Por'_{\mathbf{q},\alpha}\rangle_{\alpha\in S_{\mathbf{q}}}$ such that
         \begin{itemize}
           \item for $\alpha\in S_{\mathbf{q}}$, $\Por'_{\mathbf{q},\alpha}\lessdot\Por_{\mathbf{q},\alpha}$ and $\Qnm_{\mathbf{q},\alpha}$ is a $\Por'_{\mathbf{q},\alpha}$-name for $\Eor^{V^{\Por'_\alpha}}$ and
           \item for $\alpha\in C_{\mathbf{q}}$, $\Qnm_{\mathbf{q},\alpha}$ is a $\Por_{\mathbf{q},\alpha}$-name of a ccc poset whose domain is $Q_{\mathbf{q},\alpha}$.
         \end{itemize}
   \end{enumerate}
   The subindex $\mathbf{q}$ may be omitted when it is obvious from the
   context.\\
   For each $\alpha\leq\delta$, consider the set $\Por^*_{\mathbf{q},\alpha}=\Por^*_\alpha$ of conditions
   $p\in\Por_\alpha$ that satisfy:
   \begin{itemize}
   \item
   if $\xi\in\supp(p)\cap C$ then
   $p(\xi)$ is an ordinal in $Q_\alpha$ (not just a name)
   \item  if
   $\xi\in\supp(p)\cap S$, then $p(\xi)$ is of the form $(s,\dot{\varphi})$
   where $s\in\omega^{<\omega}$ (not just a name), $\dot{\varphi}$ is a
   $\Por'_\xi$-name of a slalom (not just a $\Por_\xi$-name of a slalom in $V^{\Por'_\xi}$) and $\mathrm{width}(\dot{\varphi})$ is decided,
   that is, there is an $n<\omega$ such that
   $\Vdash_{\Por'_\xi}n=\mathrm{width}(\dot{\varphi})$
   \end{itemize}
   It is easy to prove  (by induction on $\alpha$) that $\Por^*_\alpha$ is dense in $\Por_\alpha$.

   For $\alpha\leq\delta_{\mathbf{q}}$, $\mathbf{q}\frestr\alpha$ denotes the iteration $\mathbf{q}$ restricted up to $\alpha$, so $\delta_{\mathbf{q}\upharpoonright\alpha}=\alpha$. Clearly, $\mathbf{q}\frestr\alpha$ is an iteration candidate.
\end{definition}

The beginning of the proof of Main Lemma~\ref{PresUnbLemma} shows a typical argument with a $\Delta$-system to prove that an iteration candidate preserves an unbounded family of size $\kb$ (as sketched in Subsection~\ref{SubsecUfhelps}). Therefore, in order to extend Miller's compactness argument \cite{Mi} to fsi, we start by coding the relevant elements of countable $\Delta$-systems of iteration candidates by \emph{stem sequences}, as it is described below.

\begin{definition}\label{DefBlueprint}
   Let $\alpha^*$ be an ordinal. A \emph{stem sequence} $\mathbf{x}\in\stemseq_{\alpha^*}$ (of a countable $\Delta$-system) consists of
   \begin{enumerate}[(i)]
     \item a countable set of ordinals $w_{\mathbf{x}}\subseteq\alpha^*\cup\kb$ (where the relevant information of the coded $\Delta$-system lives),
     \item a natural number $l^*_{\mathbf{x}}$ (the size of the support of the conditions in the coded $\Delta$-system) partitioned into two sets $v_{\mathbf{x},S}$ and $v_{\mathbf{x},C}$ (the first set indicate the position of coordinates where a subposet of $\Eor$ is used, while the second set corresponds to the positions where small ccc posets are used),
     \item a subset $v_{\mathbf{x}}$ of $l^*_{\mathbf{x}}$ (the set of positions of the coordinates of the root of the $\Delta$-system),
     \item a subset $\{\alpha_{\mathbf{x},k,l}\ /\ k<\omega,\
       l<l^*_{\mathbf{x}}\}$ of $w_{\mathbf{x}}\cap\alpha^*$ satisfying:
       $\langle\{\alpha_{\mathbf{x},k,l}\ /\
       l<l^*_{\mathbf{x}}\}\rangle_{k<\omega}$ is a $\Delta$-system with root
       $\Delta_{\mathbf{x}}=\{\alpha^*_{\mathbf{x},l}\ /\ l\in v_{\mathbf{x}}\}$
       where, for $l\in v_{\mathbf{x}}$ and $k<\omega$,
       $\alpha_{\mathbf{x},k,l}=\alpha^*_{\mathbf{x},l}$; moreover, $\{\alpha_{\mathbf{x},k,l}\ /\  l<l^*\}$ is an increasing enumeration for each $k<\omega$ and $\langle\alpha_{\mathbf{x},k,l}\ /\ k<\omega\rangle$ is increasing\footnote{This is only needed for the proof of Claim \ref{limitClaim}.} for each $l\in l^*_{\mathbf{x}}\menos v_{\mathbf{x}}$,
     % for $k<\omega$,
     % $\langle\alpha_{\mathbf{x},k,l}\rangle_{l<l^*_{\mathbf{x}}}$ is
     % increasing (this is the support of the $k$-th condition of the coded
     % $\Delta$-system);
     % and, for $l\in l_{\mathbf{x}}^*\menos v_{\mathbf{x}}$,
     % $\langle\alpha_{\mathbf{x},k,l}\rangle_{k<\omega}$ is increasing,
     % moreover, for $l<m<l^*_{\mathbf{x}}$, either
     % $\sup_{k<\omega}\{\alpha_{\mathbf{x},k,l}\}<\alpha_{\mathbf{x},0,m}$ or,
     % for all $k<\omega$, $\alpha_{\mathbf{x},k,m}<\alpha_{\mathbf{x},k+1,l}$,
     \item ordinals $\langle\gamma_{\mathbf{x},k,l}\ /\ k<\omega,\ l\in v_{\mathbf{x},C} \rangle$ (the sequence of ordinals used at the $l$-th position of the $k$-th condition of the $\Delta$-system) and $\langle\gamma^*_{\mathbf{x},l}\ /\ l\in v_{\mathbf{x}}\cap v_{\mathbf{x},C}\rangle$ in $\kb\cap w_{\mathbf{x}}$ such that $\gamma_{\mathbf{x},k,l}=\gamma^*_{\mathbf{x},l}$ for all $l\in v_{\mathbf{x}}\cap v_{\mathbf{x},C}$ and $k<\omega$ (that is, the ordinals used at the positions of the root are the same for all $k$),
     \item a sequence $\langle s^*_{\mathbf{x},l}\ /\ l\in v_{\mathbf{x},S}\rangle$ of objects from $\omega^{<\omega}$ (the sequence of stems used at the $l$-th position of a condition, which is the same for all the conditions in the $\Delta$-system) and
     \item a sequence $\bar{n}^*_{\mathbf{x}}=\langle n^*_{\mathbf{x},l}\ /\ l\in v_{\mathbf{x},S}\rangle$ of natural numbers (the sequences of widths of slaloms at the $l$-th position of a condition in the $\Delta$-system).
%     \item Borel functions $F_{\mathbf{x},k,l}:\R(H_{\mathbf{x},S,k,l},H_{\mathbf{x},C,k,l},
%         W_{\mathbf{x},k,l})\to([\omega]^{\leq n^*_{\mathbf{x},l}})^\omega$ for $k<\omega$ and $l\in v_{\mathbf{x},S}$ where $H_{\mathbf{x},S,k,l}$ and $H_{\mathbf{x},C,k,l}$ are disjoint subsets of $\alpha_{\mathbf{x},k,l}\cap w_{\mathbf{x}}$ and $W_{\mathbf{x},k,l}=\langle w_{\mathbf{x},\beta,k,l}\rangle_{\beta\in H_{\mathbf{x},C,k,l}}$ is a sequence of subsets of $\kb\cap w_{\mathbf{x}}$.
   \end{enumerate}
   When there is no place to confusion, we may omit the subindex $\mathbf{x}$ for the objects of a stem sequence.
\end{definition}

If $\mathbf q$ is an iteration candidate of length $\delta$,
then every sufficiently uniform countable $\Delta$-system $\bar p =
\sqpk$ from $\Por^*_\delta$ will define a stem sequence.  But not every stem sequence is
realized
by some sequence of conditions from $\Por^*_\delta$.  In the next definition
we give a sufficient condition for a stem sequence to be realized by
an iteration, and we explain how this stem sequence gives partial information
about a sequence of conditions.

% Given a stem sequence, it can be represented in any iteration candidate, but it is possible that there are iterations without countable $\Delta$-systems that match the stem sequence. In the following definition, we state the conditions that allows to know when a stem sequence can code $\Delta$-systems of an iteration candidate and explain how a sequence of condition matches the stem sequence (in other words, how the information of the stem sequence is decoded).

\begin{definition}\label{DefLegalBlueprint}
   A stem sequence $\mathbf{x}\in\stemseq_{\alpha^*}$ (as in Definition~\ref{DefBlueprint}) is \emph{legal} for an iteration candidate $\mathbf{q}$ (as in Definition~\ref{DefItcand}) if the following hold for each $k<\omega$ and $l<l^*$ such that $\alpha_{k,l}<\delta=\delta_{\mathbf{q}}$:
   \begin{enumerate}[(i)]
      \item $\alpha_{k,l}\in C$ iff $l\in v_C$.
      \item If $l\in v_C$ then $\gamma_{k,l}<Q_{\alpha_{k,l}}$.
%      \item If $l\in v_S$ then, for each $\beta\in H_{C,k,l}$, $w_{\beta,k,l}\subseteq Q_\beta$.
   \end{enumerate}
   In this case, define $P^\infty_{\mathbf{q},\mathbf{x}}$ the set of sequences $\sqpk$ of conditions  in $\Por^*_\delta$ such that
   \begin{itemize}
     \item $\supp(p_{k})=\{\alpha_{k,l}\ /\ l<l^*\}\cap\delta$,
     \item if $\xi=\alpha_{k,l}\in\supp(p_k)\cap C$ then $p_{k}(\xi)=\gamma_{k,l}$ and
     \item if $\xi=\alpha_{k,l}\in\supp(p_k)\cap S$ then $p_{k}(\xi)=(s^*_l,\dot{\varphi}_{k,l})$ where $\dot{\varphi_{k,l}}$ is a $\Por'_\xi$-name of a slalom of width $n^*_l$.
   \end{itemize}
   Note that $\langle\supp(p_{k})\rangle_{k<\omega}$ forms a $\Delta$-system.

   Here, $P^\infty_{\mathbf{q},\mathbf{x}}$ is the set of countable $\Delta$-system that \emph{matches} the stem sequence $\mathbf{x}$.

   If $\mathbf{x}$ is legal for $\mathbf{q}$ and $\alpha\leq\delta$, then $\mathbf{x}$ is legal for $\mathbf{q}\frestr\alpha$ and, for any $\sqpk\in P^\infty_{\mathbf{q},\mathbf{x}}$, we also have $\sqk{p_{k}\frestr\alpha}\in P^\infty_{\mathbf{q}\upharpoonright\alpha,\mathbf{x}}$.
\end{definition}

Note that a stem sequence has full information about the conditions it represents only on the set $C$ where we use the small forcings.   On the set $S$, the stem sequence only knows the stems of conditions in $\Eor$, not the slaloms.

The main idea of preserving unbounded families in $\Eor$ is that, given a sequence of conditions $\sqpk$ in $\Eor$ that agree in the stems and in the width of the slaloms, it is possible to construct a limit $q$ of the sequence such that $q$ forces that infinitely many $p_k$ belong to the generic set (see Fact \ref{evDiffFact}). This limit can be found by an ultrafilter limit on the slaloms from an ultrafilter $D$ in the ground model. Moreover, there is an ultrafilter in the extension that contains $D$ as well as all sets of the form
$A_{\bar p}:=\{k<\omega\ /\ p_k\in G\}$
($G$ is the $\Eor$-generic filter) for such a sequence
$\bar p=\sqpk$ with limit $q$ in $G$.
To extend this argument to an iteration candidate, we define a kind of ultrafilter limit of a countable $\Delta$-system that matches a given stem sequence.

\begin{definition}\label{DefNiceCandidate}
   Let $\mathbf{q}$ be an iteration candidate and $\mathbf{x}\in\stemseq_{\alpha^*}$ a legal stem sequence for $\mathbf{q}$. Say that \emph{$\mathbf{D}=\langle\dot{D}_\alpha\rangle_{\alpha\leq\delta}$ solves $\mathbf{x}$ (with respect to $\mathbf{q}$)} if the following holds for each $\alpha\leq\delta$.
   \begin{enumerate}[(1)]
      \item $\dot{D}_\alpha$ is a $\Por_\alpha$-name for a non-principal ultrafilter on $\omega$.
      \item For $\alpha\in S$, $\Vdash_{\alpha}\dot{D}_\alpha\cap\Pwf(\omega)^{V^{\Por'_\alpha}}\in V^{\Por'_\alpha}$.
      \item $\alpha<\beta\leq\delta$ implies $\Vdash_{\beta}\dot{D}_\alpha\subseteq\dot{D}_\beta$.
      \item If $\alpha$ contains $\Delta_{\mathbf{x}}\cap\delta$ and $\sqpk\in P^\infty_{\mathbf{q},\mathbf{x}}$, then $q\Vdash_{\alpha}\{k<\omega\ /\ p_{k}\frestr\alpha\in\dot{G}_\alpha\}\in\dot{D}_\alpha$ where $q=\lim_{\mathbf{D}}p_k$, the \emph{$\mathbf{D}$-limit of $\sqpk$}, is defined as
          \begin{enumerate}[(i)]
            \item $\supp(q)=\Delta_{\mathbf{x}}\cap\delta$,
            \item if $\xi=\alpha_l^*\in\supp(q)\cap C$ then $q(\xi)=\gamma^*_l$ and
            \item if $\xi=\alpha_l^*\in\supp(q)\cap S$ then $q(\xi)=(s^*_l,\dot{\psi}_{l})$ where $\dot{\psi}_{l}$ is a $\Por_\xi$-name of the $\dot{D}_\xi$-limit of $\langle\dot{\varphi}_{k,l}\rangle_{k<\omega}$ (here, $p_k(\xi)=(s^*_l,\dot{\varphi}_{k,l})$ for each $k<\omega$).
          \end{enumerate}
          As each $\dot{\varphi}_{k,l}$ is a $\Por'_\xi$-name (because $p_k\in\Por^*_\delta$), by (2), $\Por_\xi$ forces $\dot{\psi}_l\in V^{\Por'_\xi}$ and $q(\xi)\in\Eor\cap V^{\Por'_\xi}$. Therefore, $q$ is a condition in $\Por_\alpha$.
   \end{enumerate}
%   This implies that, for $\alpha\leq\delta$, $\mathbf{D}\frestr(\alpha+1)$ solves $\mathbf{x}$ with respect to $\mathbf{q}\frestr\alpha$.

   Say that $\mathbf{q}$ is a \emph{nice iteration candidate} if any $\mathbf{x}\in\stemseq_\delta$ (with $\delta=\delta_{\mathbf{q}}$) legal for $\mathbf{q}$ can be solved by some $\mathbf{D}$.
\end{definition}

\begin{remark}
  In (4)(iii) of Definition \ref{DefNiceCandidate}, if $\dot{\varphi}_{k,l}$ were just a $\Por_\xi$-name of a slalom in $V^{\Por'_\xi}$ for each $k<\omega$, we would not be able to guarantee that $\langle\dot{\varphi}_{k,l}\ /\ k<\omega\rangle$ is a sequence in $V^{\Por'_\xi}$, so the ultrafilter limit $\psi_l$ and $q(\xi)$ may not be in $V^{\Por'_\xi}$.

  On the other hand, in (4), $q\in\Por_\alpha$ but it may not be a condition in $\Por^*_\alpha$ because, in (iii), $\psi_l$ may not be a $\Por'_\xi$-name. However, for the nice iteration candidate constructed in Theorem \ref{MainThm}, there is a $\Por'_\alpha$-name of $\dot{D}_\alpha\cap V^{\Por'_\alpha}$ for each $\alpha\in S$, which guarantees that, in (4), $q\in\Por^*_\alpha$.
\end{remark}

%In the iteration candidate $\mathbf{q}$, a condition uses conditions of subposets of $\Eor$ in its coordinates that belong to $S_{\mathbf{q}}$, but for a countable $\Delta$-system from $\Por^*_{\mathbf{q},\delta_{\mathbf{q}}}$ that matches $\mathbf{x}$, its $\mathbf{D}$-limit satisfies that, at coordinates in $S_{\mathbf{q}}$, the condition at such a coordinate is not outside the corresponding subposet of $\Eor$ as described in (4)(iii). This closure property guarantees success on preserving certain unbounded families of size $\kb$ of the ground model.

\begin{mainlemma}\label{PresUnbLemma}
   Let $B=\{f_\eta\ /\ \eta<\kb\}$ be an unbounded family such that, for any $K\in[\kb]^\kb$, the set  $B\frestr K:=\{f_\eta\ /\ \eta\in K\}$ is unbounded. Then, any nice iteration candidate preserves the unboundedness of  $B$.
\end{mainlemma}
\begin{proof}
   Let $\mathbf{q}$ be a nice iteration candidate as in Definitions~\ref{DefItcand} and \ref{DefNiceCandidate}. Towards a contradiction, let $p\in\Por_\delta$ and $\dot{g}$ be a $\Por_\delta$-name for a real such that $p$ forces that $\dot{g}$ dominates all the functions in $B$. For each $\eta<\kb$ choose $m_\eta<\omega$ and $p_\eta\leq p$ in $\Por^*_\delta$ such that $p_\eta\Vdash_{\delta}\forall_{m\geq m_\eta}(f_\eta(m)\leq\dot{g}(m))$. Let $u_\eta=\supp(p_\eta)$. By the $\Delta$-system lemma, we can find $K\in[\kb]^\kb$ such that $\{u_\eta\ /\ \eta\in K\}$ forms a $\Delta$-system. Moreover, we may assume:
   \begin{enumerate}[(a)]
      \item There is an $m^*$ such that $m_\eta=m^*$ for all $\eta\in K$.
      \item There is an $l^*$ such that $u_\eta=\{\alpha_{\eta,l}\ /\ l<l^*\}$ (increasing enumeration) for all $\eta\in K$.
      \item There is a $v\subseteq l^*$ such that the root of the $\Delta$-system is $\{\alpha_{\eta,l}\ /\ l\in v\}$ for any $\eta\in K$.
      \item For each $l<l^*$ with $l\notin  v$, $\seq{\alpha_{\eta,l}}{\eta\in K}$ is increasing.
      \item There is a $v_S\subseteq l^*$ such that $\supp(p_\eta)\cap S=\{\alpha_{\eta,l}\ /\ l\in v_S\}$ for all $\eta\in K$.
      \item For each $l\in v\menos v_S$ there is an ordinal $\gamma^*_{l}$ such that $p_\eta(\alpha_{\eta,l})=\gamma_{l}^*$ for all $\eta\in K$.  (Why? Recall that the forcing notions $\Q_\alpha$ with $\alpha\in C$ live on sets $Q_\alpha$ of cardinality $<\kb$.)
      \item For each $l\in v_S$ there is an $s^*_l\in\omega^{<\omega}$ and an $n^*_l<\omega$ such that $p(\alpha_{\eta,l})$ is of the form $(s^*_l,\dot{\varphi}_{\eta,l})$ for all $\eta\in K$, where $\dot{\varphi_{\eta,l}}$ is a $\Por'_\alpha$-name for a slalom of width $n^*_l$.
   \end{enumerate}
   In the ground model, we can find
   an increasing sequence $\langle\eta_k\rangle_{k<\omega}$ in $K$ and an $m\geq m^*$ such that $\langle f_{\eta_k}(m)\ /\ k<\omega\rangle$ is increasing. This is because there is $m\geq m^*$ and infinitely many $a\in\omega$ such that $\{\eta\in K\ /\ f_\eta(m)=a\}$ has size $\kb$ (if this were not the case, then there is a $K'\in[K]^\kb$ such that $B\frestr K'$ is bounded, which contradicts the hypothesis).

   Now it is easy to find a legal stem sequence $\mathbf{x}\in\stemseq_\delta$
   for $\mathbf{q}$ such that $\bar p:=\sqk{p_{\eta_k}}\in
   P^\infty_{\mathbf{q},\mathbf{x}}$, so there is some
   $\mathbf{D}=\langle\dot{D}_\alpha\rangle_{\alpha\leq\delta}$ solving
   $\mathbf{x}$ (as in Definition~\ref{DefNiceCandidate}). Let
   $q=\lim_{\mathbf{D}} \bar p \in\Por_\delta$, so $$q\Vdash_\delta\{k<\omega\ /\
   p_{\eta_k}\in\dot{G}\}\in\dot{D}_\delta,$$ which implies that
   $q\Vdash\exists^\infty_{k<\omega}(f_{\eta_k}(m)\leq\dot{g}(m))$. This last fact
   contradicts that $\langle f_{\eta_k}(m)\ /\ k<\omega\rangle$ is increasing.
\end{proof}

\section{A method to construct nice iteration candidates}\label{SecConstr}

In a very general setting, we show how to construct nice iteration candidates. We then apply this method to build the iteration for our main result.

For a nice iteration candidate, any legal stem sequence has to be solved by some sequence of names of ultrafilters. But recall from
 Definition~\ref{DefNiceCandidate}(2) that we want all witnesses $D_\alpha$
 to be in $V^{\Por'_\alpha}$, and in practice this will be a model of size
 $\le \knm$ (the value we want to force for $\non(\Mwf)$).  So we want to
 have as few such sequences of names of ultrafilters as possible, i.e., each sequence should solve
 many  legal stem sequences. For this purpose, we use the following classical result of Engelking and Kar\l owicz, which essentially says that a product of at most $2^\chi$ discrete spaces of size $\chi$ has a dense set of size $\chi$ in an appropriate box topology (in our applications, $\chi$ is between $\kb$ and $\knm$).

%\begin{sideremark}
%   Martin, March 30:
%   I think we should add something like:  ``...''
%\end{sideremark}

\begin{theorem}[Engelking and Kar\l owicz {\cite{TopThm}}, see also {\cite{MoreEngKarl}}]\label{TopTHM}
   Assume $\chi^{<\theta}=\chi$, $\delta<(2^\chi)^+$ an ordinal and $\langle A_\alpha\rangle_{\alpha<\delta}$ a sequence of sets of size $\leq\chi$. Then there is a set $\{h_\epsilon\ /\ \epsilon<\chi\}\subseteq\prod_{\alpha<\delta}A_\alpha$ such that, for any $x\in\prod^{<\theta}_{\alpha<\delta}A_\alpha:=\bigcup_{E\in[\delta]^{<\theta}}\prod_{\alpha\in E}A_\alpha$, there is $\epsilon<\chi$ such that $x\subseteq h_\epsilon$. Moreover, $\prod^{<\theta}_{\alpha<\delta}A_\alpha$ can be partitioned into sets $\langle L^*_\epsilon\rangle_{\epsilon<\chi}$ such that
   \begin{enumerate}[(i)]
     \item if $x\in L^*_\epsilon$ then $x\subseteq h_\epsilon$ and
     \item for all $x,y\in L^*_\epsilon$, $\dom x$ and $\dom y$ have the same order type and the order-preserving isomorphism $g:\dom x\to\dom y$ is the identity on $\dom x\cap \dom y$.
   \end{enumerate}
   When $2^\chi=\chi^+$, we additionally have
   \begin{enumerate}[(i')]
   \setcounter{enumi}{1}
      \item for all $x,y\in L^*_\epsilon$, $\dom x$ and $\dom y$ have the same order type and $\dom x\cap \dom y$ is an initial segment of both $\dom x$ and $\dom y$.
   \end{enumerate}
\end{theorem}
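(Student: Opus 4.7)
The plan is first to reduce to the case $A_\alpha=\chi$ for every $\alpha<\delta$ (by enumerating each $A_\alpha$ in order type $\chi$) and, since $|\delta|\leq 2^\chi=|\chi^\chi|$, to fix an injection $\alpha\mapsto f_\alpha$ of $\delta$ into $\chi^\chi$ with pairwise distinct images. The family $\{h_\epsilon:\epsilon<\chi\}$ will be indexed by triples $\epsilon=(u,\phi,\pi)$ where $u\in[\chi]^{<\theta}$, $\phi:\chi^u\rightharpoonup\chi$ is a partial function with $|\dom\phi|<\theta$, and $\pi:\dom\phi\to|\dom\phi|$ is a bijection; put $h_\epsilon(\alpha)=\phi(f_\alpha\frestr u)$ when $f_\alpha\frestr u\in\dom\phi$ and $0$ otherwise. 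Repeated applications of $\chi^{<\theta}=\chi$ (which, in particular, forces $\theta\leq\chi$) give $|[\chi]^{<\theta}|=\chi$, $|\chi^u|\leq\chi$ for each such $u$, at most $\chi$ partial functions $\phi$ per $u$, and at most $\chi$ bijections $\pi$ per $\phi$, totalling $\chi$ triples.

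For the partition, I would use a canonical assignment $x\mapsto\epsilon(x)$. Given $x$ with increasing enumeration $\dom x=\{\alpha_i:i<\lambda\}$ (so $\lambda<\theta$), let $u_x=\{\gamma_{ij}:i<j<\lambda\}$ where $\gamma_{ij}<\chi$ is the least coordinate on which $f_{\alpha_i}$ and $f_{\alpha_j}$ differ; then $|u_x|<\theta$ and the restrictions $f_{\alpha_i}\frestr u_x$ are pairwise distinct. Set $\phi_x(f_{\alpha_i}\frestr u_x)=x(\alpha_i)$ and $\pi_x(f_{\alpha_i}\frestr u_x)=i$, and let $L^*_\epsilon=\{x:\epsilon(x)=\epsilon\}$. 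The inclusion $x\subseteq h_{\epsilon(x)}$ is built into the definition of $\phi_x$, giving the first assertion. If $\epsilon(x)=\epsilon(y)=(u,\phi,\pi)$ then $\dom x$ and $\dom y$ both have order type $|\dom\phi|$, and any $\alpha\in\dom x\cap\dom y$ sitting at positions $i$ in $\dom x$ and $j$ in $\dom y$ satisfies $i=\pi(f_\alpha\frestr u)=j$, so the order-preserving isomorphism $\dom x\to\dom y$ fixes $\alpha$, giving~(ii).

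The main obstacle is the strengthening~(ii') under $2^\chi=\chi^+$, which demands that $\dom x\cap\dom y$ be a common initial segment. Here $|\delta|\leq\chi^+$, and I would refine the canonical triple by choosing the coding $\alpha\mapsto f_\alpha$ compatibly with the well-ordering of $\delta$ (feasible since $|\delta|\leq\chi^+$) and recording the nested separator sequence $u_x^{(0)}\subseteq u_x^{(1)}\subseteq\cdots$, where $u_x^{(k)}$ separates only $\{f_{\alpha_0},\ldots,f_{\alpha_k}\}$, together with the position-by-position data $\langle f_{\alpha_i}\frestr u_x^{(i)}:i<\lambda\rangle$. The delicate point will be to verify that this refinement still yields only $\chi$ classes while forcing any disagreement between $\dom x$ and $\dom y$ to lie strictly above their intersection, rather than just preserving the order-type matching~(ii).
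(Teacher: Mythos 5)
Your construction of the $h_\epsilon$ and of the classes $L^*_\epsilon$ is essentially the standard Engelking--Kar\l owicz argument (the paper itself only quotes the theorem, so there is no in-text proof to compare against), and it does prove the covering statement together with (i) and (ii): since $\chi^{<\theta}=\chi$ there are only $\chi$ triples $(u,\phi,\pi)$, the canonical assignment $x\mapsto(u_x,\phi_x,\pi_x)$ gives $x\subseteq h_{\epsilon(x)}$, and for $x,y$ in the same class the component $\pi$ forces any common point $\alpha$ to sit at the same position of both domains, so the order isomorphism fixes it. Two small repairs are needed. First, $\pi$ should be a bijection of $\dom\phi$ onto the \emph{order type} of $\dom x$ (an ordinal $<\theta$, which for infinite domains need not equal $|\dom\phi|$); your own formula $\pi_x(f_{\alpha_i}\frestr u_x)=i$ already does this, the count stays $\chi$, and this is what actually yields ``same order type'' in (ii). Second, the reduction to $A_\alpha=\chi$ should be made with fixed surjections $\chi\to A_\alpha$ and fixed choices of preimages, so that $h_\epsilon(\alpha)\in A_\alpha$ and the class of $x$ remains canonically defined.

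The genuine gap is (ii$'$), which you acknowledge leaving open, and the sketched refinement does not close it. Recording nested separator sets $u^{(k)}_x$ and the restrictions $f_{\alpha_i}\frestr u^{(i)}_x$ cannot force $\dom x\cap\dom y$ to be an initial segment: a separator set $u$ only distinguishes the functions $f_{\alpha_i}$ occurring in a \emph{single} $x$, so for $x,y$ in the same class and a common $\alpha$ at position $i$, equality of the recorded data only says that each earlier point of $\dom x$ has the same $u$-restriction as some earlier point of $\dom y$; distinct ordinals may share that restriction, so you cannot conclude $\dom x\cap\alpha=\dom y\cap\alpha$. The hypothesis $2^\chi=\chi^+$ must enter through an extra coherence device. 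For instance, when $\delta\leq\chi^+$ (so $|\beta|\leq\chi$ for all $\beta<\delta$) fix injections $e_\beta:\beta\to\chi$ and add to the invariant of $x$, at each position $i$, the set $e_{\alpha_i}[\dom x\cap\alpha_i]\in[\chi]^{<\theta}$; this keeps only $\chi^{<\theta}=\chi$ classes, and for $x,y$ in the same class with $\alpha\in\dom x\cap\dom y$, the position agreement coming from $\pi$ plus injectivity of $e_\alpha$ give $\dom x\cap\alpha=\dom y\cap\alpha$; since this holds at every common $\alpha$, any point of $\dom x$ below a common point lies in $\dom y$, so $\dom x\cap\dom y$ is an initial segment of both. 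For ordinals $\delta$ of order type above $\chi^+$ (still allowed, since $\delta<(2^\chi)^+=\chi^{++}$) yet another coding step is required, and your phrase ``choose $f_\alpha$ compatibly with the well-ordering of $\delta$'' is not yet an argument; as it stands, clause (ii$'$) is unproved.
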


Fix $\kb$ as in section~\ref{SecItCandidates}. Assume $\kb\leq\chi=\chi^{\aleph_0}$, $\delta<(2^\chi)^+$ an ordinal and $\delta=S\cup C$ a disjoint union.
For each $\alpha<\delta$ let $A_\alpha=\omega^{<\omega}\times\omega$ if
$\alpha\in S$, otherwise, $A_\alpha=\kb$. Let $\{h_\epsilon\ /\
\epsilon<\chi\}$ and $\langle L^*_\epsilon\rangle_{\epsilon<\chi}$ be as in
Theorem~\ref{TopTHM} applied to $\theta=\aleph_1$. Therefore, we can partition
$\stemseq_\delta$ into the sets
$\langle\Lambda_\epsilon\rangle_{\epsilon<\chi}$ such that
$\mathbf{x}\in\Lambda_\epsilon$ iff $z_{\mathbf{x}}\in L^*_\epsilon$, where
$\dom z_{\mathbf{x}}=\{\alpha_{\mathbf{x},k,l}\ /\ k<\omega,\
l<l^*_{\mathbf{x}}\}$ and, for $k<\omega$ and $l<l^*_{\mathbf{x}}$, if $l\in
v_{\mathbf{x},S}$ then
$z_{\mathbf{x}}(\alpha_{\mathbf{x},k,l})=(s^*_{\mathbf{x},l},n^*_{\mathbf{x},l})$,
otherwise, $z_{\mathbf{x}}(\alpha_{\mathbf{x},k,l})=\gamma_{\mathbf{x},k,l}$
when $l\in v_{\mathbf{x},C}$.

Here, $h_\epsilon$ is seen as a \emph{guardrail}
for the countable $\Delta$-systems that matches a stem sequence in
$\Lambda_\epsilon$.
All conditions following the same guardrail will be
compatible with each other.
This is because, for an iteration candidate of length
$\delta$ where $S$ corresponds to the coordinates where subposets of $\Eor$ are
used, if $\sqpk$ is a $\Delta$-system that matches
$\mathbf{x}\in\Delta_\epsilon$, the function $h_\epsilon$ describes the
behavior of each $p_k$, that is, if $\zeta\in \supp p_k\menos S$,
$p_k(\zeta)=h_\epsilon(\zeta)$ and, if $\zeta\in S\cap\supp p_k$, then
$h_\epsilon(\zeta)$ tells the stem and the width of the slalom corresponding to
$p_k(\zeta)$. All this information depends only on $\epsilon$ (and the
coordinate $\zeta$) and all the $\Delta$-systems matching stem sequences in
$\Lambda_\epsilon$ are described by the same information.

We show a way to construct, inductively, a nice iteration candidate $\mathbf{q}$ with $\delta_{\mathbf{q}}=\delta$, $S_{\mathbf{q}}=S$ and $C_{\mathbf{q}}=C$ by using the guardrails $\langle h_\epsilon\ /\ \epsilon<\chi\rangle$. Furthermore, we find $\langle\dot{D}^\epsilon_\alpha\ /\ \epsilon<\chi,\ \alpha\leq\delta\rangle$ such that, for each $\epsilon<\chi$, $\mathbf{D}^\epsilon_\delta:=\langle\dot{D}^\epsilon_\alpha\rangle_{\alpha\leq\delta}$ solves all the legal stem sequences of $\Lambda_\epsilon$. %The iteration, along with the names of ultrafilters, is constructed by recursion on $\alpha\leq\delta$ so that $\mathbf{q}\frestr\alpha$ is a nice iteration candidate and, for each $\epsilon<\chi$, $\mathbf{D}^\epsilon_\alpha:=\langle\dot{D}^\epsilon_\xi\rangle_{\xi\leq\alpha}$ solves all the legal stem sequences of $\Lambda_\epsilon\cap\stemseq_\alpha$ (so we construct one sequence of names of ultrafilters per guardrail).

\subsection*{Induction basis}
When $\delta=0$, choose an arbitrary non-principal ultrafilter $D^\epsilon_0$ for each $\epsilon<\chi$.

\begin{lemma}[Successor step]\label{succstep}
   Assume $\delta=\alpha+1$. Let $\mathbf{q}\frestr\alpha$ be a nice iteration candidate of length $\alpha$ with $S_{\mathbf{q}\upharpoonright\alpha}=S\cap\alpha$ and let $\langle\dot{D}^\epsilon_\xi\ /\ \epsilon<\chi,\ \xi\leq\alpha\rangle$ be such that, for each $\epsilon<\chi$, $\mathbf{D}^\epsilon_\alpha$ solves all $\mathbf{x}\in\stemseq_\alpha\cap\Lambda_\epsilon$ that are legal for $\mathbf{q}\frestr\alpha$. Let $\mathbf{q}$ be an iteration candidate of length $\delta$ that extends $\mathbf{q}\frestr\alpha$ such that the following conditions hold.
   \begin{enumerate}[(i)]
      \item $\alpha\in S_{\mathbf{q}}$ iff $\alpha\in S$.
      \item In the case $\alpha\in S$, $\Por_\alpha$ forces $\dot{D}^\epsilon_\alpha\cap V^{\Por'}_\alpha\in V^{\Por'_\alpha}$ for all $\epsilon<\chi$.
   \end{enumerate}
   \underline{Then}, there are $\Por_{\alpha+1}$-names $\langle\dot{D}^\epsilon_{\alpha+1}\ /\ \epsilon<\chi\rangle$ such that, for each $\epsilon<\chi$, $\mathbf{D}^\epsilon_{\alpha+1}=\mathbf{D}^\epsilon_\alpha\widehat{\ \ }\langle\dot{D}^\epsilon_{\alpha+1}\rangle$ solves all $\mathbf{x}\in\stemseq_{\alpha+1}\cap\Lambda_\epsilon$ that are legal for $\mathbf{q}$.
\end{lemma}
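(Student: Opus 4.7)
My plan is to split on $\alpha\in C$ versus $\alpha\in S$; in both cases the construction of $\dot D^\epsilon_{\alpha+1}$ first specifies which sets the name must contain, and then invokes Zorn's lemma in $V^{\Por_{\alpha+1}}$ to extend to a non-principal ultrafilter. The only genuinely new obligation at the successor step is clause~(4) of Definition~\ref{DefNiceCandidate} for $\beta=\alpha+1$: clauses~(1) and~(3) are built into the construction, clause~(2) is vacuous here (since $\delta_{\mathbf q}=\alpha+1$ and $\alpha+1\notin\delta_{\mathbf q}$), and clause~(4) for $\beta\leq\alpha$ is inherited from $\mathbf D^\epsilon_\alpha$ using that $\mathbf{x}$ legal for $\mathbf q$ is also legal for $\mathbf q\frestr\alpha$ and $\bar p\frestr\alpha\in P^\infty_{\mathbf q\frestr\alpha,\mathbf{x}}$.

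When $\alpha\in C$ the construction is easy: I pick $\dot D^\epsilon_{\alpha+1}$ to be any $\Por_{\alpha+1}$-name for a non-principal ultrafilter extending $\dot D^\epsilon_\alpha$, and verify clause~(4) for $\beta=\alpha+1$ by cases. If $\alpha\notin\Delta_{\mathbf{x}}$, the $\Delta$-system structure together with the monotonicity of $\alpha_{\mathbf{x},k,l}$ in $k$ for $l\notin v_{\mathbf{x}}$ forces at most one $k$ to satisfy $\alpha\in\supp(p_k)$; hence $\{k:p_k\frestr(\alpha+1)\in\dot G_{\alpha+1}\}$ and $\{k:p_k\frestr\alpha\in\dot G_\alpha\}$ differ by at most one element, which is irrelevant for non-principal ultrafilters. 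If $\alpha\in\Delta_{\mathbf{x}}\cap C$ and $\alpha=\alpha^*_{l_0}$, then $p_k(\alpha)=\gamma^*_{l_0}=q(\alpha)$ for all $k$, so $q\frestr(\alpha+1)$ forces the two sets to coincide, again reducing to the inductive hypothesis.

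The substantive case is $\alpha\in S$. By hypothesis~(ii), $\dot D^\epsilon_\alpha\cap\Pwf(\omega)^{V^{\Por'_\alpha}}$ is a $\Por'_\alpha$-name for a non-principal ultrafilter in $V^{\Por'_\alpha}$. Working in $V^{\Por_{\alpha+1}}$, I declare
$$\mathcal F^\epsilon:=\dot D^\epsilon_\alpha\ \cup\ \{A_{\mathbf{x},\bar p}:\mathbf{x}\in\Lambda_\epsilon\text{ legal for }\mathbf q,\ \alpha\in\Delta_{\mathbf{x}},\ \bar p\in P^\infty_{\mathbf q,\mathbf{x}},\ q_{\bar p}\in\dot G_{\alpha+1}\},$$
where $A_{\mathbf{x},\bar p}:=\{k<\omega:p_k\frestr(\alpha+1)\in\dot G_{\alpha+1}\}$ and $q_{\bar p}$ is the limit from Definition~\ref{DefNiceCandidate}(4); then $\dot D^\epsilon_{\alpha+1}$ is any non-principal ultrafilter extending $\mathcal F^\epsilon$, provided $\mathcal F^\epsilon$ has the finite intersection property. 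Verifying the FIP is the heart of the proof, and is where the guardrails of Theorem~\ref{TopTHM} are decisive: for every $\mathbf{x}\in\Lambda_\epsilon$ with $\alpha$ in its root, the pair $(s^*_{\mathbf{x},l_0},n^*_{\mathbf{x},l_0})$ is determined by $h_\epsilon(\alpha)$, hence is a common stem $s^*$ and common width bound $n^*$ independent of $\mathbf{x}$. Consequently any finitely many limits $q^{(1)},\dots,q^{(t)}$ arising from stem sequences in $\Lambda_\epsilon$ with $\alpha$ in their root have $q^{(j)}(\alpha)=(s^*,\dot\psi^{(j)})$ with $\dot\psi^{(j)}$ the pointwise ultrafilter-limit of $\Por'_\alpha$-names of slaloms of width $\leq n^*$, and they admit a common extension $r\in\Por_{\alpha+1}$ with $r(\alpha)=(s^*,\bigcup_j\dot\psi^{(j)})$ (of width $\leq tn^*$, still a valid $\Eor$-condition since $s^*(i)\notin\dot\psi^{(j)}(i)$ for every $j$). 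The FIP then reduces, via the inductive hypothesis applied to each $\mathbf{x}^{(j)}\frestr\alpha$ (which puts $\{k:p^{(j)}_k\frestr\alpha\in\dot G_\alpha\}$ in $\dot D^\epsilon_\alpha$ below $r\frestr\alpha$), to the multi-sequence strengthening of Fact~\ref{evDiffFact}: for every $B\in\dot D^\epsilon_\alpha\cap\Pwf(\omega)^{V^{\Por'_\alpha}}$ and every $r'\leq r(\alpha)$ in $\Eor^{V^{\Por'_\alpha}}$, the set
$$\bigl\{k\in B:r'(i)\notin\dot\varphi^{(j)}_{k,l_0}(i)\text{ for all }i\in\dom(s^{r'})\text{ and }j=1,\dots,t\bigr\}$$
belongs to the restricted ultrafilter, because for each fixed $i$ and $j$ the set $\{k:r'(i)\notin\dot\varphi^{(j)}_{k,l_0}(i)\}$ is in it by the very definition of the ultrafilter-limit $\dot\psi^{(j)}$; any $k$ in this intersection provides a common witness of compatibility.

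The main obstacle is precisely this FIP step for $\alpha\in S$: coordinating finitely many stem sequences in the same $\Lambda_\epsilon$ simultaneously, so that a single $k$ witnesses compatibility across all the associated conditions both at coordinate $\alpha$ (via the multi-sequence ultrafilter-limit argument) and below $\alpha$ (via the inductive hypothesis together with the guardrail-enforced agreement on stems and widths). Everything else — the Zorn extension to a non-principal ultrafilter, clause~(4) for the old coordinates, and the whole Case $\alpha\in C$ — follows routinely from the inductive data and the $\Delta$-system structure of stem sequences.
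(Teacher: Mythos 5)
Your proposal is correct and follows essentially the same route as the paper: reduce everything to a finite-intersection-property claim for $\dot{D}^\epsilon_\alpha$ together with the sets $A_{\bar p}$, handle $\alpha\in C$ by noting the relevant set is already (up to a finite difference) forced into $\dot{D}^\epsilon_\alpha$ below the limit, and for $\alpha\in S$ use the guardrail-enforced common stem and width at $\alpha$, the definition of the $\dot{D}^\epsilon_\alpha$-limit of the slaloms to get the compatibility sets into the ultrafilter, amalgamation by unioning finitely many slaloms, and the level-$\alpha$ solving hypothesis below $\alpha$. The only differences (extending arbitrarily in the $C$-case and then verifying clause (4), and stating the key density step with $B$ drawn from the restricted ultrafilter rather than an arbitrary member of $\dot{D}^\epsilon_\alpha$) are cosmetic and do not affect the argument.
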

\begin{proof}
%Assume we have constructed $\mathbf{q}\frestr\alpha$ and $\langle\dot{D}^\epsilon_\xi\rangle_{\xi\leq\alpha}$ as required. To construct $\mathbf{q}\frestr(\alpha+1)$ we need to define $\Qnm_\alpha$ and $\dot{D}^\epsilon_{\alpha+1}$ for all $\epsilon<\chi$. In the case $\alpha\in C$, we have the freedom to choose $\Qnm_\alpha$ as a $\Por_\alpha$-name of a ccc poset of size $<\kb$ (which domain is an ordinal $Q_\alpha$). When $\alpha\in S$, choose $\Por'_\alpha\lessdot\Por_\alpha$ (in our applications, the size of $\Por'_\alpha$ cannot be so big) so that $\Por_\alpha$ forces $\dot{D}^\epsilon_\alpha\cap V^{\Por'_\alpha}\in V^{\Por'_\alpha}$. Let $\Qnm_\alpha$ be a $\Por'_\alpha$-name of $\Eor^{V^{\Por'_\alpha}}$.

   It is enough to prove the following.

\begin{claim}\label{succClaim}
   $\Por_{\alpha+1}$ forces that, for any $\epsilon<\chi$, the family
   $$\dot{D}^\epsilon_\alpha\ \cup\ \big\{  A_{\bar p }  \ /\ \bar p \in P^\infty_{\mathbf{q},\mathbf{x}},\ \mathbf{x}\in\Lambda_\epsilon\cap\stemseq_{\alpha+1}\textrm{\ legal},\ \lim\nolimits_{\mathbf{D}^\epsilon_\alpha}\bar p \in\dot{G} \big\}$$
   (where $A_{\bar p}:=\{ k<\omega\ /\ p_{k}\in\dot{G} \}$ for any
     $\bar p = \sqpk $) has the finite intersection property.
\end{claim}
\begin{proof}
    In the case $\alpha\in C$, it is enough to prove that, if
    $\mathbf{x}\in\Lambda_\epsilon\cap\stemseq_{\alpha+1}$ is legal for
    $\mathbf{q}$, $\sqpk\in
    P^\infty_{\mathbf{q},\mathbf{x}}$ and $q$ is its
    $\mathbf{D}^\epsilon_{\alpha}$-limit, then $q$ forces (with respect to
    $\Por_{\alpha+1}$) that $\{k<\omega\ / \
    p_{k}\in\dot{G}\}\in\dot{D}^\epsilon_\alpha$. We may assume that
    $\alpha\in\supp q$ (if not, $\supp p_{k}\subseteq\alpha$ for all $k<\omega$
    and the claim is straightforward), so
    $p_{k}(\alpha)=q(\alpha)=h_\epsilon(\alpha)$ for all $k<\omega$.
    On the other hand, $q\frestr\alpha$ forces that $\{k<\omega\ /\
    p_{k}\frestr\alpha\in\dot{G}\}\in\dot{D}^\epsilon_\alpha$ (because
    $q\frestr\alpha$ is the $\mathbf{D}^\epsilon_\alpha$-limit of
    $\sqk{p_{k}\frestr\alpha}$), so the conclusion is clear.

    Now, assume that $\alpha\in S$. Let $i^*<\omega$, $\mathbf{x}^i\in\Lambda_\epsilon\cap\stemseq_{\alpha+1}$ legal for $\mathbf{q}$ for $i<i^*$, $\sqk{p_{i,k}} \in P^\infty_{\mathbf{q},\mathbf{x}^i}$, $q_i=\lim_{\mathbf{D}^\epsilon_\alpha}p_{i,k}$, a $\Por_\alpha$-name $\dot{a}$ of a set in $\dot{D}^\epsilon_\alpha$, a fixed $k^*<\omega$ and a condition $r\in\Por_{\alpha+1}$ stronger than $q_i$ for each $i<i^*$. We find an $r'\leq r$ in $\Por_{\alpha+1}$ and a $k>k^*$ such that $r'$ forces that $k\in\dot{a}$ and $p_{i,k}\in\dot{G}$ for all $i<i^*$. We may assume that $r\frestr\alpha$ forces $\dot{a}\subseteq\bigcap_{i<i^*}\{k<\omega\ /\ p_{i,k}\frestr\alpha\in\dot{G}\}\in\dot{D}^\epsilon_\alpha$. Without loss of generality, we assume that $\alpha\in\supp(q_i)$ for all $i<i^*$, so, if $h_\epsilon(\alpha)=(s,n)$, then $p_{i,k}(\alpha)=(s,\dot{\varphi}_{i,k})$ for some $\Por'_\alpha$-name of a slalom $\dot{\varphi}_{i,k}$ of width $n$ and $q_i(\alpha)=(s,\dot{\psi}_i)$ where $\dot{\psi}_i$ is a $\Por_\alpha$-name of the $\dot{D}^\epsilon_\alpha$-limit of $\sqk{\dot{\varphi}_{i,k}}$ (which is forced to be in $V^{\Por'_\alpha}$ by (ii)). Let $G_\alpha$ be $\Por_\alpha$-generic over $V$ with $r\frestr\alpha\in G_\alpha$. In $V_\alpha=V[G_\alpha]$, let $r(\alpha)=(t,\psi')\in\Eor\cap V^{\Por'_\alpha}$, which is stronger than $q_i(\alpha)=(s,\psi_i)$ for all $i<i^*$. As $t(j)\notin\psi_i(j)$ for any $j<|t|$, then the set $a_i=\{k<\omega\ /\ \forall_{j<|t|}(t(j)\notin\varphi_{i,k}(j))\}\in D^\epsilon_\alpha$. So choose $k>k^*$ in $a\cap\bigcap_{i<i^*}a_i$ and let $r'(\alpha)=(t,\psi'')$ where $\psi''(j)=\psi'(j)\cup\bigcup_{i<i^*}\varphi_{i,k}(j)$ for all $j<\omega$. Clearly, $r'(\alpha)$ is stronger than $r(\alpha)$ and than $p_{i,k}(\alpha)$ for all $i<i^*$. Back in $V$, let $r'\frestr\alpha\leq r\frestr\alpha$ in $\Por_\alpha$ forcing the above statement, so $r'=r'\frestr\alpha\cup\{(\alpha,\n{ r'(\alpha)})\}$ is as desired.
\end{proof}

   Choose $\dot{D}^\epsilon_{\alpha+1}$ a $\Por_{\alpha+1}$-name of an ultrafilter containing the set of the claim.
\end{proof}

\begin{lemma}[Limit step]\label{limstep}
   Assume $\delta$ is a limit ordinal, $\mathbf{q}$ is an iteration candidate of length $\delta$ and $\langle\dot{D}^\epsilon_\alpha\ /\ \epsilon<\chi,\ \alpha<\delta\rangle$ a sequence of $\Por_\delta$-names such that, for each $\alpha<\delta$ and $\epsilon<\chi$, $\mathbf{D}^\epsilon_\alpha$ solves all $\mathbf{x}\in\stemseq_\alpha\cap\Lambda_\epsilon$ that are legal for $\mathbf{q}\frestr\alpha$. \underline{Then}, there are $\Por_{\delta}$-names $\langle\dot{D}^\epsilon_{\delta}\ /\ \epsilon<\chi\rangle$ such that, for each $\epsilon<\chi$, $\mathbf{D}^\epsilon_{\delta}=\mathbf{D}^\epsilon_{<\delta}\widehat{\ \ }\langle\dot{D}^\epsilon_{\delta}\rangle$ solves all $\mathbf{x}\in\stemseq_{\delta}\cap\Lambda_\epsilon$ that are legal for $\mathbf{q}$ (here, $\mathbf{D}^\epsilon_{<\delta}=\langle\dot{D}^\epsilon_\alpha\rangle_{\alpha<\delta}$).
\end{lemma}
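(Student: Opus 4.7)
The plan is to mirror the proof of Lemma~\ref{succstep}. I define $\dot{D}^\epsilon_\delta$ as a $\Por_\delta$-name for any non-principal ultrafilter on $\omega$ extending the family
$$\Sigma^\epsilon := \bigcup_{\alpha<\delta} \dot{D}^\epsilon_\alpha\ \cup\ \big\{A_{\bar p}\ /\ \bar p\in P^\infty_{\mathbf{q},\mathbf{x}},\ \mathbf{x}\in\Lambda_\epsilon\cap\stemseq_\delta\text{ legal for }\mathbf{q},\ \lim\nolimits_{\mathbf{D}^\epsilon_{<\delta}}\bar p\in \dot{G}_\delta\big\},$$
where $A_{\bar p}=\{k<\omega\ /\ p_k\in\dot{G}_\delta\}$. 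Conditions (1)--(3) of Definition~\ref{DefNiceCandidate} for $\mathbf{D}^\epsilon_\delta$ are then immediate, with (3) using that $\Sigma^\epsilon$ explicitly contains every earlier $\dot{D}^\epsilon_\alpha$; condition (4) follows once I establish, as an analogue of Claim~\ref{succClaim}, that $\Por_\delta$ forces $\Sigma^\epsilon$ to have the finite intersection property, for then $q=\lim_{\mathbf{D}^\epsilon_{<\delta}}\bar p$ automatically forces $A_{\bar p}\in\dot{D}^\epsilon_\delta$.

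To verify FIP, take finitely many $\dot{a}_j\in\dot{D}^\epsilon_{\beta_j}$ (with $\beta_j<\delta$), stem sequences $\mathbf{x}^i\in\Lambda_\epsilon$ legal for $\mathbf{q}$ with witnesses $\sqk{p_{i,k}}\in P^\infty_{\mathbf{q},\mathbf{x}^i}$ and $\mathbf{D}^\epsilon$-limits $q_i$, a condition $r\in\Por_\delta$ with $r\leq q_i$ for every $i$, and $k^*<\omega$; I need to find $r''\leq r$ and $k>k^*$ with $r''\leq p_{i,k}$ for every $i$ and $r''\Vdash k\in\dot{a}_j$ for every $j$. The crucial observation is that each $\supp q_i=\Delta_{\mathbf{x}^i}\cap\delta$ is finite, as is $\supp r$, so I can pick $\bar\alpha<\delta$ with $\supp r\cup\bigcup_j(\beta_j+1)\cup\bigcup_i\Delta_{\mathbf{x}^i}\subseteq\bar\alpha$. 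The increasing hypothesis on $\sqk{\alpha_{\mathbf{x}^i,k,l}}$ for $l\in l^*_i\menos v_{\mathbf{x}^i}$ (the reason for the footnote in Definition~\ref{DefBlueprint}) guarantees that, after discarding finitely many $k$, each such coordinate is either always below $\bar\alpha$ or always above it. Thus, after reindexing, the sequence $\sqk{p_{i,k}\frestr\bar\alpha}$ matches a restricted stem sequence $\tilde{\mathbf{x}}^i\in\stemseq_{\bar\alpha}$ legal for $\mathbf{q}\frestr\bar\alpha$, whose $z$-function is contained in $z_{\mathbf{x}^i}\subseteq h_\epsilon$, so $\tilde{\mathbf{x}}^i\in\Lambda_\epsilon$; its $\mathbf{D}^\epsilon_{\bar\alpha}$-limit equals $q_i$, since the root positions (all below $\bar\alpha$) and the slalom limits at those positions are unchanged.

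Applying the inductive hypothesis to $\tilde{\mathbf{x}}^i$ yields $q_i\Vdash_{\bar\alpha}\{k<\omega\ /\ p_{i,k}\frestr\bar\alpha\in\dot{G}_{\bar\alpha}\}\in\dot{D}^\epsilon_{\bar\alpha}$, while by Definition~\ref{DefNiceCandidate}(3) $r$ also forces each $\dot{a}_j\in\dot{D}^\epsilon_{\bar\alpha}$. Hence some $r'\leq r$ in $\Por_{\bar\alpha}$ picks a $k>k^*$ with $r'\leq p_{i,k}\frestr\bar\alpha$ for every $i$ and $r'\Vdash k\in\dot{a}_j$ for every $j$. To extend $r'$ to $r''\in\Por_\delta$ I exploit the guardrail: at each $\xi\in[\bar\alpha,\delta)\cap\bigcup_i\supp p_{i,k}$, every $p_{i,k}$ touching $\xi$ is dictated by $h_\epsilon(\xi)$ --- the same ordinal if $\xi\in C$, or the same stem $s^*_\xi$ and width $n^*_\xi$ if $\xi\in S$. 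I therefore set $r''(\xi)$ to this common ordinal in the $C$-case, and to $(s^*_\xi,\bigcup_i\dot{\varphi}_{i,k,l_i(\xi)})$ in the $S$-case --- a valid $\Eor$-condition since each $\dot{\varphi}_{i,k,l_i(\xi)}$ has width $n^*_\xi$ and avoids $s^*_\xi$. Then $r''\leq p_{i,k}$ for every $i$ and $r''\Vdash k\in\dot{a}_j$ for every $j$, as desired.

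The hard part is this final combination on $[\bar\alpha,\delta)$, which relies decisively on the guardrail uniformity built into $\Lambda_\epsilon$: different $\mathbf{x}^i$ following the same $h_\epsilon$ can collide at a coordinate above $\bar\alpha$ only by agreeing on the ordinal/stem/width data and differing in slalom values, which are combined by pointwise union. A secondary technical subtlety is that the restriction $\tilde{\mathbf{x}}^i$ should remain in $\Lambda_\epsilon$; this is automatic under the covering reading $\Lambda_\epsilon=\{\mathbf{x}\ /\ z_{\mathbf{x}}\subseteq h_\epsilon\}$, and, in the strict-partition version of Theorem~\ref{TopTHM}, follows from clause (ii') when $2^\chi=\chi^+$.
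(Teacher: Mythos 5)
Your proposal follows the paper's route very closely: reduce to a finite--intersection--property claim, push everything into a single stage $\bar\alpha<\delta$, invoke the inductive hypothesis there, then extend back to $\delta$ using the guardrail. This is the right plan, and the extension step (the ``hard part'' you identify, with the pointwise union of slaloms at collisions above $\bar\alpha$) is handled the same way the paper handles it.

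However, there is a genuine gap in the reduction step, and it is exactly the move that you flag as a technical convenience: the \emph{reindexing}. You discard an initial segment of the index set $k$, relabel, and then claim that the $\mathbf{D}^\epsilon_{\bar\alpha}$-limit of the reindexed sequence $\sqk{p_{i,k}\frestr\bar\alpha}$ is still $q_i$ ``since the root positions and the slalom limits at those positions are unchanged.'' This is false in general. The ultrafilter limit is not invariant under shifting the index: if $D$ is a non-principal ultrafilter on $\omega$, then $\{k : m\in\varphi_k(n)\}\in D$ need not imply $\{k : m\in\varphi_{k+n_0}(n)\}\in D$ (take $D\ni\{\text{even numbers}\}$, $n_0=1$). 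So after reindexing, the $\dot{D}^\epsilon_\xi$-limit of the shifted slalom sequence at a root coordinate $\xi$ can differ from $q_i(\xi)$, and then you have no reason that the $r$ you fixed (which is $\leq q_i$) is $\leq$ the limit of the reindexed sequence, so the inductive hypothesis does not yield the $r'$ you want.

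The reindexing is also unnecessary, and that is the fix. Definition~\ref{DefLegalBlueprint} already records that a stem sequence $\mathbf{x}^i$ legal for $\mathbf{q}$ remains legal for $\mathbf{q}\frestr\bar\alpha$ and that $\sqk{p_{i,k}\frestr\bar\alpha}\in P^\infty_{\mathbf{q}\frestr\bar\alpha,\mathbf{x}^i}$; the $P^\infty$ condition $\supp(p_k\frestr\bar\alpha)=\{\alpha_{k,l}\ /\ l<l^*\}\cap\bar\alpha$ is itself tolerant of the non-uniform ``shape'' of supports caused by small $k$'s having $\alpha_{i,k,l}<\bar\alpha$ at slots $l$ with unbounded sup. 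So you should keep the same $\mathbf{x}^i$ throughout and never pass to $\tilde{\mathbf{x}}^i$; then $\lim_{\mathbf{D}^\epsilon_{\bar\alpha}}\sqk{p_{i,k}\frestr\bar\alpha}=q_i$ literally, by the definition of the $\mathbf{D}$-limit (same root, same ultrafilters, same slalom sequences). The paper also enlarges $\alpha$ so that the bounded slots are absorbed below $\alpha$ and uses the increasing hypothesis only to pick the final $k$ so large that every $l\in u_i$ has $\alpha_{i,k,l}>\alpha$; this keeps the above-$\alpha$ part of the chosen $p_{i,k}$'s clean for the combination step. Finally, a minor omission: the paper first dispatches the uncountable-cofinality case separately (taking $\dot{D}^\epsilon_\delta=\bigcup_{\xi<\delta}\dot{D}^\epsilon_\xi$) before running the FIP argument for $\cf(\delta)=\omega$; your argument as written silently assumes the interesting countable-cofinality case where one really must climb through a cofinal $\omega$-sequence of stages.
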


\begin{proof}
If $\delta$ has uncountable cofinality, let $\dot{D}^\epsilon_\delta$ be a $\Por_\delta$-name of the ultrafilter $\bigcup_{\xi<\delta}\dot{D}^\epsilon_\xi$. So assume that $\delta$ has countable cofinality.

\begin{claim}\label{limitClaim}
    $\Por_{\delta}$ forces that, for any $\epsilon<\chi$, the family
$$\bigcup_{\alpha<\delta}\dot{D}^\epsilon_\alpha\ \cup \
   \big\{ A_{\bar p} \
   /\ \bar p \in
P^\infty_{\mathbf{q},\mathbf{x}},\
\mathbf{x}\in\Lambda_\epsilon\cap\stemseq_\delta\textrm{\ legal},\
\lim\nolimits _{\mathbf{D}^\epsilon_{<\delta}} \bar p \in\dot{G} \big\}$$
 has the finite intersection property.
\end{claim}
\begin{proof}
   Let $\{\mathbf{x}^i\ /\ i<i^*\}$ be a finite subset of $\Lambda_\epsilon\cap\stemseq_\delta$ of legal stem sequences for $\mathbf{q}$, $\sqk{p_{i,k} }\in P^\infty_{\mathbf{q},\mathbf{x}^i}$ for each $i<i^*$, $q_i=\lim_{\mathbf{D}^\epsilon_{<\delta}}p_{i,k}$ and $\dot{a}$ a $\Por_\delta$-name of a set in $\bigcup_{\alpha<\delta}\dot{D}^\epsilon_\alpha$. Let $p\in\Por_\delta$ be a condition stronger than $q_{i}$ for all $i<i^*$ and let $k^*<\omega$ be arbitrary. We want to find $p^*\leq p$ and $k>k^*$ such that $p^*$ is stronger than $p_{i,k}$ for all $i<i^*$ and forces $k\in\dot{a}$.

   As in the notation of Definition~\ref{DefBlueprint}, for each $i<i^*$ let $w_i=w_{\mathbf{x}^i}$ $l^*_i=l^*_{\mathbf{x}^i}$, $v_{i,S}=v_{\mathbf{x}^i,S}$ and so on. For the nontrivial case, we assume that $\sup_{l<l^*_i,k<\omega}\{\alpha_{i,k,l}\}=\delta$. For $i<i^*$, $l\in v_{i,S}$ and $k<\omega$,  $p_{i,k}(\alpha_{i,k,l})=(s^*_{i,l},\dot{\varphi}_{i,k,l})$ where $\dot{\varphi}_{i,k,l}$ is a $\Por'_{\alpha_{i,k,l}}$-name of a slalom of width $n^*_{i,l}$.

   By strengthening $p$, find $\alpha<\delta$ that contains $\supp p\cup\bigcup_{i<i^*}\supp q_{i}$ and such that $p$ forces that $\dot{a}\in\dot{D}^\epsilon_\alpha$, so, without loss of generality, $\dot{a}$ can be assumed to be a $\Por_\alpha$-name. Even more, $\alpha<\delta$ can be found so that, for any $i<i^*$ and $l<l^*_i$, if there is some $k<\omega$ such that $\alpha_{i,k,l}\geq\alpha$, then $\sup_{k<\omega}\{\alpha_{i,k,l}\}=\delta$. For each $i<i^*$, let $u_i=\{l<l^*_i\ /\ \sup_{k<\omega}\{\alpha_{i,k,l}\}=\delta\}$ (note that this is an interval of the form $[l'_i,l^*_i)$ where $l'_i$ is above the root $\Delta_{\mathbf{x}^i}=\supp q_i$).

   By hypothesis, find $p'\leq p$ in $\Por_\alpha$ and $k>k^*$ such that $p'$ is stronger than $p_{i,k}\frestr\alpha$ for all $i<i^*$ and forces $k\in\dot{a}$. Moreover, $k$ can be found so that\footnote{This is the only place where we need $\langle\alpha_{i,k,l}\ /\ k<\omega\rangle$ increasing.} $\alpha_{i,k,l}>\alpha$ for any $i<i^*$ and $l\in u_i$. Thus, because $\Eor$ is $\sigma$-centered and each $z_{\mathbf{x}^i}\subseteq h_\epsilon$, there is a condition $p^*\leq p'$ in $\Por_\delta$ stronger than $p_{i,k}$ for all $i<i^*$. Indeed, $\supp p^*=\supp p'\cup\bigcup_{i<i^*}\supp p_{i,k}$, $p^*(\zeta)=p'(\zeta)$ for $\zeta\in\supp p'$, $p^*(\zeta)=p_{i,k}(\zeta)=h_\epsilon(\zeta)$ for $\zeta\in\supp p_{i,k}\cap v_{i,C}\menos\alpha$ and $p^*(\zeta)=(s_\zeta,\dot{\psi}_\zeta)$ for $\zeta\in\big(\bigcup_{i<i^*}\supp p_{i,k}\cap v_{i,S}\big)\menos\alpha$ where $h_\epsilon(\zeta)=(s_\zeta,n_\zeta)$ and $\dot{\psi}_\zeta$ is a $\Por'_\zeta$-name of the slalom given by $\dot{\psi}(j)=\bigcup\{\dot{\varphi}_{i,k,l}(j)\ /\ \alpha_{i,k,l}=\zeta,\ l<l^*_i,\ i<i^*\}$. $p^*$ is as desired because, if $\zeta=\alpha_{i,k,l}=\alpha_{i',k,l'}$, when $\zeta\in C$ then $p_{i,k}(\zeta)=p_{i',k}(\zeta)=h_{\epsilon}(\zeta)$ and, when $\zeta\in S$, $p_{i,k}(\zeta)=(s^*_{i,l},\dot{\varphi}_{i,k,l})$, $p_{i',k}(\zeta)=(s^*_{i',l'},\dot{\varphi}_{i',k,l'})$ and $s^*_{i,l}=s^*_{i',l'}=s_\zeta$.
\end{proof}

   Choose $\dot{D}^\epsilon_\delta$ a $\Por_\delta$-name of an ultrafilter that contains the set of the previous claim.
\end{proof}

\begin{remark}\label{RemWhyEK}
   In Lemma \ref{succstep}, if all the $\dot{D}^\epsilon_\alpha$
   ($\epsilon<\chi$) are (forced to be) equal to some ultrafilter
   $\dot{D}_\alpha$, then Claim~\ref{succClaim} can be similarly proven without
   fixing $\epsilon$, that is, $\Por_{\alpha+1}$ forces that
   $\dot{D}_\alpha\ \cup\ \big\{ A_{\bar p} \ /\
    \bar p \in
   P^\infty_{\mathbf{q},\mathbf{x}},\
   \mathbf{x}\in\stemseq_{\alpha+1}\textrm{\ legal},\
   \lim\nolimits _{\mathbf{D}^\epsilon_{\alpha}} \bar p \in\dot{G} \big\}$ has the
   finite intersection property. Nevertheless, in Lemma \ref{limstep} when $\delta$ is a limit of
   countable cofinality, the corresponding statement for Claim~\ref{limitClaim}
   may not be true when all the $\dot{D}^\epsilon_\alpha$ are the same for each
   $\alpha<\delta$ so, at that point, it becomes necessary to have different
   sequences of names of ultrafilters for each $\epsilon<\chi$ and
   Theorem~\ref{TopTHM} must be used to have as few sequences as possible (each
   one with respect to a guardrail $h_\beta$).

   For instance, let $\delta=\omega$ and $\epsilon,\epsilon'<\chi$ such that $h_\epsilon$ and $h_{\epsilon'}$ are incompatible everywhere, that is, for each $m<\omega$, if $A_m=\kb$ then $\Vdash_{m}h_\epsilon(m) \perp_{\Qnm_m} h_{\epsilon'}(m)$ and, when $A_m=\omega^{<\omega}\times\omega$, the first coordinates of both $h_\epsilon(m)$ and $h_{\epsilon'}(m)$ are incompatible. If $\mathbf{x}\in\Delta_\epsilon\cap\stemseq_\omega$ and $\mathbf{x}'\in\Delta_{\epsilon'}\cap\stemseq_\omega$ are legal for $\mathbf{q}$ such that $l^*_{\mathbf{x}}=l^*_{\mathbf{x}'}=1$ and $\alpha_{\mathbf{x},k,0}=\alpha_{\mathbf{x}',k,0}=k$, if $\sqpk\in P^\infty_{\mathbf{q},\mathbf{x}}$ and $\sqk{p'_k}\in P^\infty_{\mathbf{q},\mathbf{x}'}$, then $\lim_{\mathbf{D}^\epsilon_{<\omega}}p_k=\lim_{\mathbf{D}^{\epsilon'}_{<\omega}}p'_k$ is the trivial condition and it is clear that $\Por_\omega$ forces $\{k<\omega\ /\ p_k\in\dot{G}\}\cap\{k<\omega\ /\ p'_k\in\dot{G}\}=\varnothing$.
\end{remark}

\section{Proof of the main result}\label{SecMain}

To prove our main result, we construct a nice iteration candidate with the book-keeping arguments described in Subsection~\ref{SubsecIt}. Thanks to Main Lemma~\ref{PresUnbLemma}, we can guarantee that $\bfrak$ is the value we want in the extension.

\newcommand{\thetanull}{\kappa_{an}}
\newcommand{\thetaeins}{\kappa_{cn}}
\newcommand{\nunu}{\kappa_{nm}}
\newcommand{\laam}{\kappa_{ct}}

\begin{maintheorem}\label{MainThm}
   Let $\thetanull\leq\thetaeins\leq\kb\leq\nunu=\nunu^{\aleph_0}$ be regular uncountable cardinals and $\laam=\laam^{<\nunu}\leq2^\chi$ where $\kb\leq\chi=\chi^{\aleph_0}\leq\nunu<\laam$. Assume $\bfrak=\dfrak=\kb$. Then, there is a ccc poset that forces
$$\add(\Nwf)=\thetanull\leq\cov(\Nwf)=\thetaeins\leq\bfrak=\kb\leq\non(\Mwf)=\nunu<\cov(\Mwf)=\cfrak=\laam.$$
\end{maintheorem}

Note that, assuming GCH, if $\thetanull\leq\thetaeins\leq\kb\leq\nunu$ are regular uncountable cardinals and $\laam>\nunu$ is a cardinal of cofinality $\geq\nunu$, it is not hard to construct a model, by forcing, that satisfies the hypothesis of the theorem with $\chi=\kb$.

\begin{proof}
   We construct a nice iteration candidate $\mathbf{q}$ of length $\delta_{\mathbf{q}}=\laam\cdot\nunu$ (ordinal product) that forces our desired statement. Let $\laam=S'\cup C'$ be a partition where each $S'$ and $C'$ has size $\laam$ and also let $C'=C'_0\cup C'_1\cup C'_2\cup C'_3$ be a partition where each $C'_i$ has size $\laam$. Let $S=S_{\mathbf{q}}=\bigcup_{\beta<\nunu}(\laam\cdot\beta+S')$, $C_i=\bigcup_{\beta<\nunu}(\laam\cdot\beta+C'_i)$ for $i<4$ and $C=C_{\mathbf{q}}=C_0\cup C_1\cup C_2\cup C_3$.

   We construct $\mathbf{q}$ by recursion using the method in Section~\ref{SecConstr}. The induction basis and the limit step are clear by Lemma \ref{limstep}, so we only have to describe what we do in the successor step in such a way that Lemma \ref{succstep} can be applied. Assume we have constructed our iteration up to $\alpha<\laam\cdot\nunu$ and that $\langle\dot{D}^\epsilon_\xi\ /\ \epsilon<\chi,\ \xi\leq\alpha\rangle$ is as in Lemma \ref{succstep}. $\alpha$ is of the form $\laam\cdot\beta+\zeta$ for some (unique) $\beta<\nunu$ and $\zeta<\laam$. Consider:
   \begin{enumerate}[(i)]
      \item $\{\Anm_{\beta,\xi}\ /\ \xi\in C'_0\}$ lists the $\Por_{\laam\cdot\beta}$-names of all ccc posets whose domain is an ordinal $<\thetanull$.
      \item $\{\Bnm_{\beta,\xi}\ /\ \xi\in C'_1\}$ lists the $\Por_{\laam\cdot\beta}$-names of all subalgebras of random forcing $\Bor^{V^{\Por_{\laam\cdot\beta}}}$ of size $<\thetaeins$.
      \item $\{\Dnm_{\beta,\xi}\ /\ \xi\in C'_2\}$ lists the $\Por_{\laam\cdot\beta}$-names of all $\sigma$-centered subposets of Hechler forcing $\Dor^{V^{\Por_{\laam\cdot\beta}}}$ of size $<\kb$.
      \item $\{\dot{F}^\beta_\xi\ /\ \xi\in S'\}$ lists the $\Por_{\laam\cdot\beta}$-names for all sets of size $<\nunu$ of slaloms of finite width in $V^{\Por_{\laam\cdot\beta}}$.
   \end{enumerate}
   If $\alpha\in C$, let
   \[\Qnm_\alpha=\left\{
        \begin{array}{ll}
            \Anm_{\beta,\zeta} & \textrm{if $\zeta\in C'_0$,}\\
            \Bnm_{\beta,\zeta} & \textrm{if $\zeta\in C'_1$,}\\
            \Dnm_{\beta,\zeta} & \textrm{if $\zeta\in C'_2$,}\\
            \Cnm & \textrm{(Cohen forcing) if $\zeta\in C'_3$.}
        \end{array}\right.\]
   If $\alpha\in S$, we need to construct $\Por'_\alpha$ and we
want\footnote{
See Remark~\ref{less.or.le}, which explains why we only require
$|\Por'_\alpha|\le \nunu$ rather than the strict inequality that the
reader might have expected.}
it to have size $\leq\nunu$. Given $\epsilon<\chi$ and $\dot{a}$ a (nice) $\Por_\alpha$-name of a subset of $\omega$, choose a maximal antichain $A^\epsilon_{\dot{a}}$ that decides either $\dot{a}\in\dot{D}^\epsilon_\alpha$ or $\omega\menos\dot{a}\in\dot{D}^\epsilon_\alpha$. Therefore, by closing under this and other simpler operations, we can find $\Por'_\alpha\lessdot\Por_\alpha$ of size $\leq\nunu$ such that $\dot{F}_\zeta$ is a $\Por'_\alpha$-name and, for any $\epsilon<\chi$ and a (nice) $\Por'_\alpha$-name $\dot{a}$ of a subset of $\omega$, $A^\epsilon_{\dot{a}}\subseteq\Por'_\alpha$ (because $\nunu^{\aleph_0}=\nunu$), which implies that there is a $\Por'_\alpha$-name of $\dot{D}^\epsilon_\alpha\cap V^{\Por'_\alpha}$. Let $\Qnm_\alpha=\Eor^{V^{\Por'_\alpha}}$, which adds a real that evades eventually all the slaloms from $\dot{F}_{\zeta}$. It is clear that Lemma \ref{succstep} applies, which finishes the construction.

\relax   From the results already proved or cited, it is easy to check that
$\Por_\delta$ forces $\thetanull\leq\add(\Nwf)$, $\thetaeins\leq\cov(\Nwf)$,
$\kb\leq\bfrak$ and $\nunu\leq\non(\Mwf)$.  The relations
 $\add(\Nwf)\leq\thetanull$ and
$\cov(\Nwf)\leq\thetaeins$ in the extension are consequences of Corollary \ref{larged}(a) applied to the pairs $(\theta,\sqsubset)=(\kan,\in^*)$ (see Example \ref{SubsecUnbd}(4)) and $(\theta,\sqsubset)=(\kcn,\pitchfork)$ (see Example \ref{SubsecUnbd}(3)), respectively. The crucial inequality
 $\bfrak\leq\kb$ is a
consequence of Lemma~\ref{PresUnbLemma} (applied to a scale $\langle f_\alpha\rangle_{\alpha<\kb}$ that lives the ground model, which exists because, there, $\bfrak=\dfrak=\kb$). Besides,
 $\non(\Mwf)\leq\nunu$ holds in the
extension because of the $\nunu$-cofinal many Cohen reals added along the
iteration. It is also clear that $\cfrak\leq\laam$ is forced.

  Finally, by Corollary \ref{larged}(b) applied to the pair $(\theta,\sqsubset)=(\knm^+,\eqcirc)$ (see Example \ref{SubsecUnbd}(1)), $\cov(\Mwf)=\dfrak_\eqcirc\geq\kct$.
%   The set of the first $\laam$-many Cohen reals $\{c_\xi\ /\ \xi<\laam\}$ has the following property in the final extension: if $Y\subseteq\omega^\omega$ is a set of size $<\laam$, then there is a $\xi<\laam$ such that no $y\in Y$ is eventually different with $c_\xi$. This is because any iterand has size $<\nunu^+$ and by the preservation properties in the above context (see, for example, \cite[Section~2]{Me-MatIt}). Therefore, $\laam\leq\cov(\Mwf)$ in the final extension.
\end{proof}

\begin{remark}
\label{less.or.le}
   If we further assume that $\chi<\nunu$ and $\mu^{\aleph_0}<\nunu$ for all $\mu<\nunu$, then we can similarly construct a nice iteration candidate of length $\laam$ forcing the same as in Theorem~\ref{MainThm}. In the argument of this, for $\alpha\in S$, we can construct $\Por'_\alpha$ of size $<\nunu$, so we can force $\non(\Mwf)\leq\nunu$ by Corollary \ref{larged}(a) applied to $(\theta,\sqsubset)=(\knm,\eqcirc)$. However, this is less general because $\nunu$ is not allowed to be a successor of a cardinal with countable cofinality.
\end{remark}

\begin{remark}
   A similar proof of Theorem~\ref{MainThm} can be performed using bounded versions of $\Eor$ to ease the compactness arguments, but it has the disadvantage that we are restricted to $2^\chi=\chi^+=\laam$ and $\chi=\nunu$. The argument is similar but much more difficult, we point out the differences with the presented argument.
   \begin{enumerate}[(1)]
      \item Fix $b:\omega\to\omega$ a fast increasing function with $b(0)>0$. Let $\Eor^b$ be the poset whose conditions are pairs $(s,\varphi)$ where $s$ is a finite sequence below $b$ and $\varphi$ is a slalom of width at most $|s|$. The order is similar to $\Eor$. Like $\Eor$, this poset adds an eventually different real (below $b$) and does not add dominating reals (moreover, it is $\leq^*$-good), however, it is not $\sigma$-centered.
      \item In all the arguments, everything related to $\Eor$ should be respectively modified to the context of $\Eor^b$.
      \item In Definition~\ref{DefBlueprint}, we additionally have to include Borel functions that code the names of slaloms corresponding to the coordinates of subposets of $\Eor^b$ of the conditions of the countable $\Delta$-system that is coded. In this case, those codes should be called \emph{blueprints}. Moreover, $n^*_l\leq|s^*_l|$ for all $l<l^*$.
      \item $2^\chi=\chi^+$ is assumed because we need (ii') of Theorem~\ref{TopTHM} in this case. Guardrails $h_\beta$ should also talk about the Borel functions included in the blueprints of $\Lambda_\epsilon$ (by further assuming $\cfrak=\kb$ in the ground model), so the last part of the proof of Claim~\ref{limitClaim} could be argued.
      \item In the construction of the iteration for the main result, we have to guarantee that the used subposets of $\Eor^b$ don't add random reals nor destroy a witness of $\add(\Nwf)$ that we want to preserve, that is, that they are both $\kan$-$\in^*$-good and $\kcn$-$\pitchfork$-good. For this, a notion of $(\pi,\rho)$-linkedness, defined in \cite{KO}, justifies the desired preservation (for $\kan$-$\in^*$-goodness, see \cite[Section~5]{BrMe}).
   \end{enumerate}
\end{remark}

\section{Questions}\label{SecQ}

\begin{question}\label{QwithCovM<c}
   Is there a model of $\aleph_1<\add(\Nwf)<\cov(\Nwf)<\bfrak<\non(\Mwf)<\cov(\Mwf)<\cfrak$? or just a model of $\bfrak<\non(\Mwf)<\cov(\Mwf)<\cfrak$?
\end{question}

A ZFC model of $\aleph_1<\add(\Nwf)=\non(\Mwf)<\cov(\Mwf)=\cf(\Nwf)<\cfrak$ was constructed in \cite[Thm. 11]{Me-MatIt} by a matrix iteration (a technique to construct fsi's in a non-trivial way). The difficulty to answer Question \ref{QwithCovM<c} lies in the fact that there are no known easy fsi constructions that force $\non(\Mwf)<\cov(\Mwf)<\cfrak$.

\begin{question}
   Is there a model of $\aleph_1<\add(\Nwf)<\bfrak<\cov(\Nwf)<\non(\Mwf)<\cov(\Mwf)<\cfrak$? or just a model of $\bfrak<\cov(\Nwf)<\non(\Mwf)$?
\end{question}

As pointed out by Judah and Shelah \cite{JShDOm} (see also \cite{Paw-Dom}), subalgebras of random forcing may add dominating reals, so there are similar difficulties as those described in Section \ref{SecOverview} for subposets of $\Eor$. It seems that sophisticated techniques as in \cite{ShCov} may help to deal with this problem.

\bibliography{left}

\begin{thebibliography}{FGKS15}

\bibitem[AY08]{MoreEngKarl}
Uri Abraham and Yimu Yin.
\newblock A note on the {E}ngelking-{K}ar\l owicz theorem.
\newblock {\em Acta Math. Hungar.}, 120(4):391--404, 2008.

\bibitem[BJ95]{BaJu95}
Tomek Bartoszy\'nski and Haim Judah.
\newblock {\em {Set Theory: On the Structure of the Real Line}}.
\newblock A K Peters, Wellesley, Massachusetts, 1995.

\bibitem[BM14]{BrMe}
J{\"o}rg Brendle and Diego~Alejandro Mej{\'{\i}}a.
\newblock Rothberger gaps in fragmented ideals.
\newblock {\em Fund. Math.}, 227(1):35--68, 2014.

\bibitem[Bre91]{Br-Cichon}
J{\"o}rg Brendle.
\newblock Larger cardinals in {C}icho\'n's diagram.
\newblock {\em J. Symbolic Logic}, 56(3):795--810, 1991.

\bibitem[CKP85]{CKP}
Jacek Cicho{\'n}, Anastasis Kamburelis, and Janusz Pawlikowski.
\newblock On dense subsets of the measure algebra.
\newblock {\em Proc. Amer. Math. Soc.}, 94(1):142--146, 1985.

\bibitem[EK65]{TopThm}
Ryszard Engelking and Monika Kar{\l}owicz.
\newblock Some theorems of set theory and their topological consequences.
\newblock {\em Fund. Math.}, 57:275--285, 1965.

\bibitem[FGKS15]{FGKS}
Arthur Fischer, Martin Goldstern, Jakob Kellner, and Saharon Shelah.
\newblock Creature forcing and five cardinal characteristics of the continuum.
\newblock {\em Arch. Math. Logic}, to appear, 2015.

\bibitem[{Fre}84]{FremCich}
David~H. {Fremlin}.
\newblock {Cicho\'n's diagram.}
\newblock {\em {Publ. Math. Univ. Pierre Marie Curie, S\'emin. Initiation
  Anal.}}, 66, 1984.

\bibitem[Gol93]{gold}
Martin Goldstern.
\newblock Tools for your forcing construction.
\newblock In {\em Set theory of the reals ({R}amat {G}an, 1991)}, volume~6 of
  {\em Israel Math. Conf. Proc.}, pages 305--360. Bar-Ilan Univ., Ramat Gan,
  1993.

\bibitem[JS90]{JuSh-KunenMillerchart}
Haim Judah and Saharon Shelah.
\newblock The {K}unen-{M}iller chart ({L}ebesgue measure, the {B}aire property,
  {L}aver reals and preservation theorems for forcing).
\newblock {\em J. Symbolic Logic}, 55(3):909--927, 1990.

\bibitem[JS93]{JShDOm}
Haim Judah and Saharon Shelah.
\newblock Adding dominating reals with the random algebra.
\newblock {\em Proc. Amer. Math. Soc.}, 119(1):267--273, 1993.

\bibitem[Kam89]{kamburelis}
Anastasis Kamburelis.
\newblock Iterations of {B}oolean algebras with measure.
\newblock {\em Arch. Math. Logic}, 29(1):21--28, 1989.

\bibitem[KO14]{KO}
Shizuo Kamo and Noboru Osuga.
\newblock Many different covering numbers of {Y}orioka's ideals.
\newblock {\em Arch. Math. Logic}, 53(1-2):43--56, 2014.

\bibitem[Mej13]{Me-MatIt}
Diego~Alejandro Mej{\'{\i}}a.
\newblock Matrix iterations and {C}ichon's diagram.
\newblock {\em Arch. Math. Logic}, 52(3-4):261--278, 2013.

\bibitem[Mil81]{Mi}
Arnold~W. Miller.
\newblock Some properties of measure and category.
\newblock {\em Trans. Amer. Math. Soc.}, 266(1):93--114, 1981.

\bibitem[Paw92]{Paw-Dom}
Janusz Pawlikowski.
\newblock Adding dominating reals with {$\omega^\omega$} bounding posets.
\newblock {\em J. Symbolic Logic}, 57(2):540--547, 1992.

\bibitem[She00]{ShCov}
Saharon Shelah.
\newblock Covering of the null ideal may have countable cofinality.
\newblock {\em Fund. Math.}, 166(1-2):109--136, 2000.
\newblock Saharon Shelah's anniversary issue.

\end{thebibliography}
\bibliographystyle{alpha}

%\bibitem[FGKS]{5inv}
%A. Fischer, M. Goldstern, J. Kellner, S. Shelah:
%\emph{Creature forcing and five cardinal characteristics of the continuum.}
%Submitted.

%\bibitem[G93]{tools}
%M. Goldstern:
%\emph{Tools for your forcing construction.}
%In: Judah H. (ed.) Set theory of the reals, Israel Math. Conf. Proc., Bar Ilan University, 1993, pp. 305-360.

%\bibitem[MiS14]{MiS}
%H. Mildenberger, S. Shelah:
%\emph{Many countable support iterations of  proper forcings preserve Souslin trees.}
%Ann. Pure Appl. Logic 165 (2014) 573--608.

%\bibitem[S]{PropImprop}
%S. Shelah:
%\emph{Proper and improper forcing.}
%Second edition, Springer, 1998.

\end{document}